\documentclass[10pt]{amsart}

\usepackage[T1]{fontenc}
\usepackage[utf8]{inputenc}
\usepackage{amsmath,amssymb,amsthm}
\usepackage{mathpazo}
\usepackage[hidelinks]{hyperref}

\newtheorem{theorem}{Theorem}[section]
\newtheorem{lemma}[theorem]{Lemma}
\newtheorem*{conjecture}{Conjecture}
\numberwithin{equation}{section}
\newcommand{\E}{\mathbb E}
\newcommand{\Prb}{\mathbb P}
\newcommand{\cdg}{\operatorname{codeg}}
\usepackage{needspace}

\title[Nearly optimal packings of equally sized rainbow forests]
{Nearly optimal packings of equally sized rainbow forests}

\author{Boyan Xu}
\address[Xu]{School of Data Science and Information Engineering,
Guizhou Minzu University, Guiyang, Guizhou Province, 550025, China}
\email{boyan04518@gmail.com}

\begin{document}

\begin{abstract}
A forest in an edge-colored graph is rainbow if its edges have
pairwise distinct colors. We prove that for every $\varepsilon>0$
and all sufficiently large integers $m$, every properly edge-colored
simple graph with $km$ edges, where $1\leq k\leq2m$ and every color
class has size at most $m$, contains at least $(1-\varepsilon)m$
pairwise edge-disjoint rainbow forests, each with exactly $k$ edges.

The range $k\leq2m$ is best possible: for every $k>2m$ there are
such graphs containing no $k$-edge forest. Thus the conjecture of
Montgomery, Pokrovskiy, and Sudakov fails beyond this range, while
our theorem establishes its predicted conclusion throughout the
largest possible range of $k$. The number of forests is
asymptotically optimal. The proof uses an orientation dichotomy,
hypergraph matching, matroid intersection, and martingale
concentration.
\end{abstract}

\maketitle

\section{Introduction}\label{sec:introduction}

Throughout the paper, graphs are finite and simple. An edge-coloring
is \emph{proper} if adjacent edges have different colors. A subgraph
is \emph{rainbow} if its edges have pairwise distinct colors.
For an edge-coloring $c$ of a graph $G$, put
\[
E_\gamma=\{e\in E(G):c(e)=\gamma\},
\qquad
\mu(G,c)=\max_\gamma |E_\gamma|.
\]
The coloring is \emph{globally $m$-bounded} if $\mu(G,c)\leq m$.

The problem of decomposing complete graphs into rainbow spanning
trees goes back to Brualdi and Hollingsworth~\cite{BH}, who
considered colorings in which every color class is a perfect
matching. For arbitrary proper edge-colorings of complete graphs,
the existence of linearly many edge-disjoint rainbow spanning trees
was proved independently by Pokrovskiy and Sudakov~\cite{PS} and
by Balogh, Liu, and Montgomery~\cite{BLM}. Montgomery,
Pokrovskiy, and Sudakov~\cite{MPS} subsequently proved that every
properly edge-colored $K_n$ contains $(1-o(1))n/2$ edge-disjoint
rainbow spanning trees. Glock, K\"uhn, Montgomery, and
Osthus~\cite{GKMO} proved that every one-factorization of a
sufficiently large $K_{2n}$ admits a decomposition into isomorphic
rainbow spanning trees. Kim, K\"uhn, Kupavskii, and
Osthus~\cite{KKKO} obtained related approximate decompositions of
quasirandom graphs into rainbow spanning structures under local and
global color bounds.

To relate their spanning-tree and small-forest results, Montgomery,
Pokrovskiy, and Sudakov proposed the following conjecture.
(The published statement uses the phrase \emph{forests of order $k$},
which Benny Sudakov confirmed is intended to mean forests with $k$ edges.)

\Needspace{7\baselineskip}
\begin{conjecture}[Montgomery--Pokrovskiy--Sudakov {\cite[Conjecture~10.2]{MPS}}]
For every $\varepsilon>0$, there exists a positive integer $m$
such that, for every positive integer $k$, every properly
edge-colored, globally $m$-bounded graph with $km$ edges contains
at least $(1-\varepsilon)m$ edge-disjoint rainbow forests, each
with exactly $k$ edges.
\end{conjecture}

However, the conclusion cannot hold for all $k$. Let
$m\geq1$ and $k\geq2m+1$ be integers. Since
\[
km\leq\binom{k}{2},
\]
choose a $km$-edge subgraph $G$ of $K_k$ and give its edges
pairwise distinct colors. Then $\mu(G,c)=1\leq m$, but every
forest in $G$ has at most $k-1$ edges. Thus $G$ contains no
$k$-edge forest.

Thus any result of the conjectured form must restrict $k$ to at most
$2m$. Our main result shows that this is the only necessary
restriction on $k$ asymptotically, establishing the conjectured packing
conclusion for every $1\leq k\leq2m$.

\Needspace{9\baselineskip}
\begin{theorem}\label{thm:main}
For every $\varepsilon>0$, there exists $m_0=m_0(\varepsilon)$
such that the following holds for all integers $m\geq m_0$ and
$1\leq k\leq2m$.
If $G$ is a simple graph with $e(G)=km$ and $c$ is a proper
edge-coloring of $G$ satisfying $\mu(G,c)\leq m$, then $G$ contains
at least $(1-\varepsilon)m$ pairwise edge-disjoint rainbow forests,
each with exactly $k$ edges.
\end{theorem}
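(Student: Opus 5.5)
We aim to produce $t=(1-\varepsilon)m$ edge-disjoint rainbow forests of size $k$. The plan is to build them first fractionally or randomly, and then complete them by matroid intersection. The abstract names the tools: an orientation dichotomy, hypergraph matching, matroid intersection and martingale concentration. We arrange them as follows.

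\emph{Step 1 (orientation dichotomy).} A $k$-edge subgraph $F$ contains a forest with $k$ edges exactly when $F$ is a forest. For any graph $H$, the graphic-matroid rank of $H$ is at least $e(H)/\max_{S}\bigl(e(H[S])/(|S|-1)\bigr)$ up to rounding, by Nash-Williams. Since $e(G)=km$ with $k\le 2m$, the obstruction appears only when $G$ has dense parts. We split into two cases.
\begin{itemize}
\item[(a)] Every vertex subset $S$ spans at most $(1+\varepsilon/10)\,m(|S|-1)$ edges. Then $G$ has arboricity about $m$.
\item[(b)] Some set $S$ is denser than this.
\end{itemize}
In case (b) we use $|S|\ge 2m$, which is forced because $e(G[S])\le\binom{|S|}{2}$. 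We also use the orientation (Hakimi) form: $G$ has an orientation with every indegree at most roughly $m$. The point of this orientation is that assigning each vertex's in-edges to distinct forests makes every forest acyclic automatically. An orientation in which each vertex has at most one in-edge in each class is a pseudoforest orientation, and it fails only through cycles. Dense parts must therefore be handled separately. There the proper coloring together with $\mu\le m$ forces many colors, and since $|S|\ge 2m$ we have enough vertices to route edges inside $S$ as a forest.

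\emph{Step 2 (random/nibble assignment).} In case (a), fix an orientation with indegrees at most $(1+\varepsilon/10)m$. We assign edges to $t$ forests so that each forest receives at most one in-edge at every vertex and at most one edge of each color. We model this as a hypergraph matching problem. The vertices of the hypergraph are the pairs (vertex, forest index) and (color, forest index), together with the edges of $G$. A hyperedge $\{e,(\text{head}(e),i),(c(e),i)\}$ represents putting $e$ into forest $i$.

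The resulting hypergraph is roughly regular: an edge has degree $t$, and the other vertices have degree at most about $m$ after normalizing. Codegrees are bounded, because the coloring is proper and $G$ is simple. A Pippenger--Spencer or Frankl--R\"odl type matching therefore covers all but an $\varepsilon$ fraction. Each forest $i$ receives in-edges with distinct heads, so it contains no cycle other than a directed cycle. To rule out directed cycles we take the orientation acyclic, which is possible by choosing a vertex order. The cost is that the indegree bound is then given by degeneracy rather than arboricity, and repairing this is part of the obstacle below. Martingale (Azuma/Freedman) concentration shows that each forest gets $(1-\varepsilon/2)k$ edges.

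\emph{Step 3 (completion by matroid intersection).} Each partial forest needs about $\varepsilon k$ more edges from the leftover edges. We augment the forests one at a time, using Edmonds' matroid intersection on the graphic matroid and the partition matroid of colors restricted to the unused edges. The leftover edges are randomly spread, and concentration shows that every forest still has rank deficiency at most $\varepsilon k$. The two matroids therefore have a common independent set large enough to reach exactly $k$ edges. If a forest ends up with too many edges, we delete some.

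\emph{Main obstacle.} The hardest part is the interaction between acyclicity and the dense case (b), where the arboricity is close to $k/2$ relative to the number of forests. The difficulty is sharpest near $k=2m$, where the extremal example $K_k$ is tight. Our first attempt would be a Nash-Williams/Tutte rank formula argument. The matroid-intersection min-max condition for a union of $t$ rainbow forests requires, for every $X\subseteq E$, that
\[
|X|\,\ge\,\sum_{i}\bigl(k-r(E\setminus X)\bigr)
\]
in a suitably adjusted form. We would verify this condition using $\mu\le m$ for the color side and the density bound $e(G[S])\le\binom{|S|}{2}$ together with $k\le 2m$ for the graphic side. This gives a fractional packing with about $m$ forests, which the nibble then rounds.
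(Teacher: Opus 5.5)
Your proposal has a genuine gap, and it lies exactly where the difficulty is.

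\textbf{The dichotomy and Step 2.} Your dichotomy is vacuous. If $e(G[S])>(1+\varepsilon/10)m(|S|-1)$, then $\binom{|S|}{2}\ge e(G[S])$ forces $|S|>2(1+\varepsilon/10)m$. Hence $e(G[S])>(1+\varepsilon/10)m\bigl(2(1+\varepsilon/10)m-1\bigr)>2m^2\ge e(G)$ for large $m$, which is impossible. So case (b) never occurs, and Steps 2--3 must handle every graph. This includes a properly colored $K_{2m}$ with $m$ further edges leaving it and $k=2m$.

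There Step 2 breaks. A class with at most one in-edge per vertex can contain directed cycles. If you make the orientation acyclic, $K_{2m}$ is oriented transitively, with indegrees $0,1,\dots,2m-1$. Then $t\approx m$ classes absorb at most $\sum_v\min(d^-(v),t)$, which is at most about $\frac32m^2$, of the $2m^2-m$ core edges. The forests therefore have about $\frac34k$ edges, not $(1-\varepsilon/2)k$.

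The idea you are missing is the paper's random center sets. For each forest $i$, choose a random set $C_i$, allow only edges from a tail outside $C_i$ to a head in $C_i$, and use each tail at most once. Every class is then a star forest, whatever the orientation. The Delcourt--Postle theorem covers all requirement vertices, so each forest gets exactly $k$ edges with no completion step. The price is that tail resources must have smaller weighted degree than requirement vertices. This needs an orientation of maximum outdegree at most $m-m^{1-\kappa}$, strictly below $m$, and that is the real dichotomy. Its failure (Lemmas~\ref{lem:hall-orientation} and~\ref{lem:core}) forces $k\ge2m-o(m)$ and a core $R$ with $|R|=k+o(m)$ containing all but $o(m^2)$ edges.

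\textbf{The dense regime.} Here your Step 3 and your ``main obstacle'' paragraph supply no argument. The fractional problem is trivial: by simplicity and $k\le2m$, the constant vector $x\equiv1/m$ already satisfies \eqref{eq:matroid} (compare \eqref{eq:rank-slack}). So all the content is in the rounding.

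Your displayed rank inequality is not a valid criterion. Rainbow forests are common independent sets of two matroids, not independent sets of one, so matroid union and Nash-Williams-type formulas do not govern packings of them. Nor can a bounded-uniformity nibble round a packing whose members are $k$-edge forests.

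Sequential completion from the leftovers is equally unsupported. When $k=2m$ and $G$ is close to $K_{2m}$ plus about $m$ outside edges, each forest must be a rainbow spanning tree of the core plus one outside edge. You would thus essentially be reproving the Montgomery--Pokrovskiy--Sudakov packing theorem. A partial forest may have only about $\varepsilon m$ unused colors, and nothing guarantees that their leftover edges, depleted by earlier completions, connect all of its components.

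The paper instead iterates a sampler. It builds a sparse fractional point of the polytope with nearly uniform marginals $1/s$, rounds it by matroid intersection, and deletes the resulting forest. It tracks degrees, codegrees, cut counts and the adjacency matrix by scalar and matrix Freedman bounds. Keeping the fractional point feasible requires the uniform slack \eqref{eq:uniform-slack}. This follows from \eqref{eq:rank-slack} except in the critical configuration, where $k=2m$ and some $2m$-set has at most $(1+\eta)m$ outside edges. That case is treated separately by thinning to exact counts and sampling a spanning tree of the core plus one outside edge (Lemmas~\ref{lem:critical} and~\ref{lem:paired}).
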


Theorem~\ref{thm:main} is sharp in the following
senses. The number of forests is asymptotically optimal, since $km$
edges allow at most $m$ edge-disjoint $k$-edge forests. The
construction above shows that the restriction $k\leq2m$ is sharp.
The global color bound cannot be increased to $m+1$: partition
a matching of $m(m+1)$ edges into $m$ color classes of size
$m+1$. This is a proper coloring, but it has no rainbow subgraph
with $k=m+1\leq2m$ edges.

The proof begins with a dichotomy according to whether $G$ admits
an orientation of maximum outdegree at most $m-m^{1-\kappa}$,
for a small constant $\kappa>0$. When such an orientation exists,
random center sets allow us to construct rainbow star forests.
After separating heavy and light colors, we use the bipartite
hypergraph matching theorem of Delcourt and Postle~\cite{DP}
to coordinate the choices across the intended forests, enforcing
the color constraints and edge-disjointness.

If no such orientation exists, an orientation criterion forces a
dense vertex set. More precisely, $k=2m-o(m)$, and almost all
edges lie in a set $R$ with $|R|=k+o(m)$; each vertex outside
$R$ has fewer than $m/10$ neighbors in $R$. This reduces the
problem to repeatedly sampling rainbow forests from the dense core.

The critical case $k=2m$, when the number of edges outside a
$2m$-vertex core is close to its minimum $m$, is handled by sampling rainbow
forests consisting of a spanning tree of the core and one outside
edge. The remaining case uses fractional rainbow-forest samplers
with almost uniform edge marginals. Matroid intersection performs
the rounding, while scalar and matrix martingale inequalities
show that the structural conditions survive successive deletions.

Section~\ref{sec:preliminaries} collects the preliminary tools.
Sections~\ref{sec:orientation} and~\ref{sec:core} establish the
orientation dichotomy and its structural consequence,
Sections~\ref{sec:paired} and~\ref{sec:sampling} treat the two
dense-core cases, and Section~\ref{sec:main-proof} completes
the proof of Theorem~\ref{thm:main}.

\section{Preliminaries}\label{sec:preliminaries}

For an edge set $F$ and a weight vector $x$, write
$x(F)=\sum_{e\in F}x_e$. The adjacency matrix of a graph $H$ is
denoted by $A_H$, and $\|\cdot\|$ is the Euclidean operator norm.
For distinct vertices $u,v$, write
$\cdg_H(u,v)=|N_H(u)\cap N_H(v)|$.
For $S\subseteq V(H)$, write
$d_H(v,S)=|N_H(v)\cap S|$.
All constants in estimates may depend on fixed parameters but not
on the graph. We delete isolated vertices whenever this is convenient.

\smallskip\noindent\emph{Matroid intersection.}
We use matroid intersection to turn fractional edge weights into
random rainbow forests with prescribed edge marginals. The following
consequence of the polyhedral form of Edmonds' matroid intersection
theorem~\cite{Edmonds} gives this representation; see also
\cite[Corollaries~41.12a--b]{Schrijver}.
If $x\geq0$ satisfies
\begin{equation}\label{eq:matroid}
\begin{aligned}
x(E(G))&=k,\qquad x(E_\gamma)\leq1
&&\text{for every color }\gamma,\\
x(E(G[S]))&\leq|S|-1
&&(\varnothing\ne S\subseteq V(G)),
\end{aligned}
\end{equation}
then $x$ is a convex combination of incidence vectors of rainbow
$k$-edge forests.

The color inequalities place $x$ in the independence polytope of
the color partition matroid. The vertex-set inequalities imply
every graphic rank inequality: for any edge set, sum the
inequalities over the vertex sets of its nontrivial components.
Since $x\geq0$ and $x(E(G))=k$, every edge set also has weight at
most $k$. Thus $x$ lies in the independence polytope of the graphic
matroid truncated to rank $k$. By the matroid intersection theorem,
the intersection of these two independence polytopes is integral.
Every common independent set has
at most $k$ edges, so the total-weight equality forces every set
in the convex combination to have exactly $k$ edges. In particular,
there is a random rainbow $k$-edge forest whose inclusion probability
for each edge $e$ is $x_e$.

We also use a variant that produces a spanning tree on a
distinguished vertex set $C$ of size $n$, together with one edge
outside $G[C]$. Put $B=E(G)\setminus E(G[C])$. Suppose that $x\geq0$,
the weights on $G[C]$ satisfy the graphic inequalities and have
total $n-1$, the weights on $B$ have total one, and every color has
total weight at most one. Apply matroid intersection to the color
partition matroid and the direct sum of the graphic matroid of
$G[C]$ and the rank-one uniform matroid on $B$. The prescribed totals
force every set in the resulting convex combination to contain
$n-1$ edges of $G[C]$ and one edge of $B$. The $n-1$ edges in $G[C]$
form a spanning tree of $G[C]$. Since the additional edge has an
endpoint outside $C$, adding it to the spanning tree creates no cycle.
Hence every resulting set is a rainbow forest, with the prescribed
edge marginals.

\smallskip\noindent\emph{Concentration inequalities.}
Our randomized constructions require concentration for a single
sampling step and for quantities tracked through successive
deletions. We use weighted Chernoff bounds and bounded differences
for the sampling step, and scalar and matrix martingale bounds for
the iteration.

Independent Bernoulli sampling and uniform sampling without
replacement in independent groups both satisfy the weighted
Chernoff bounds used here; for sampling without replacement, see
\cite[Theorem~4]{Hoeffding}. All weights are nonnegative and are
constant within each group sampled without replacement.
If the largest weight is at most $b$, a fixed relative deviation
from a mean $z$ has probability at most $2\exp(-c z/b)$, where
$c>0$ depends only on the relative deviation.
For fixed $0<c_1<c_2$, the same bound holds for an upper tail
at $c_2z$ when the mean is at most $c_1z$, and for a lower tail
at $c_1z$ when the mean is at least $c_2z$, with $c$ depending on
$c_1,c_2$. Deterministic summands may be included.

We also use the bounded differences inequality~\cite[Lemma~1.2]{McDiarmid}.
If changing independent variable $i$ changes a function $Z$ by
at most $b_i$, then
\[
\Prb(|Z-\E Z|\geq t)\leq2\exp\left(-2t^2/\sum_i b_i^2\right).
\]

For martingales, we use Freedman's inequality~\cite{Freedman} and
its matrix version~\cite[Theorem~1.2]{Tropp}. Let $(M_t)$ be a
self-adjoint matrix martingale of dimension $d$, adapted to a
filtration $(\mathcal F_t)$, with $M_0=0$. Write
$\Delta_t=M_t-M_{t-1}$ and assume that, almost surely, for every $t$,
\[
\|\Delta_t\|\leq R,\qquad
\left\|\sum_{j=1}^t
\E\bigl[\Delta_j^2\mid\mathcal F_{j-1}\bigr]\right\|\leq V.
\]
Applying the matrix inequality to both $M_t$ and $-M_t$ gives
\begin{equation}\label{eq:freedman}
\Prb\left(\max_t\|M_t\|\geq u\right)
\leq2d\exp\left(-\frac{u^2}{2(V+Ru/3)}\right).
\end{equation}
The scalar version is obtained with $d=1$. We apply these bounds
to stopped martingales after subtracting their predictable drifts.

\smallskip\noindent\emph{Auxiliary lemmas.}
The first lemma gives balanced rainbow forest partitions from
a degeneracy bound and a color bound.

\begin{lemma}\label{lem:equitable}
Let $b$ be a positive integer. Every edge-colored $b$-degenerate
graph whose color classes have size at most $b$ has an edge
partition into $b$ rainbow forests whose sizes differ by at most
one. Consequently, if $q$ is a positive integer and $H$ is a graph
with an edge-coloring $c$ satisfying
$e(H)=qb<\binom{b+2}{2}$ and $\mu(H,c)\leq b$, then $H$ has an
edge partition into $b$ rainbow $q$-edge forests.
\end{lemma}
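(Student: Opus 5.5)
The plan is to translate the problem into a proper edge-coloring of an auxiliary bipartite multigraph, and then balance the color classes by the usual alternating-path swaps.

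Since $H$ is $b$-degenerate, its vertices can be ordered $v_1,\dots,v_n$ so that each $v_i$ has at most $b$ neighbors among $v_1,\dots,v_{i-1}$. For an edge $e=v_iv_j$ with $i<j$, call $v_j$ its \emph{later endpoint}. Define a bipartite multigraph $\Gamma$ as follows. The left side is $V(H)$, the right side is the set of colors, and each edge $e\in E(H)$ becomes an edge of $\Gamma$ joining the later endpoint of $e$ to the color $c(e)$. A left vertex $v$ has degree in $\Gamma$ equal to its number of earlier neighbors, which is at most $b$. A right vertex $\gamma$ has degree $|E_\gamma|\le b$. So $\Delta(\Gamma)\le b$, and by König's edge-coloring theorem $E(\Gamma)=E(H)$ splits into $b$ matchings $M_1,\dots,M_b$ of $\Gamma$.

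Each $M_i$ is a rainbow forest in $H$, for two reasons.
\begin{itemize}
\item It is rainbow because $M_i$ meets each color vertex at most once.
\item It is acyclic because each vertex of $H$ is the later endpoint of at most one edge of $M_i$. Any cycle in $H$ has a vertex that comes latest in the ordering, and that vertex is the later endpoint of both of its cycle edges.
\end{itemize}

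To equalize the sizes, suppose $|M_i|\ge|M_j|+2$ for some $i,j$. The union $M_i\cup M_j$ in $\Gamma$ is a disjoint union of even cycles and alternating paths. Some path component must contain more edges of $M_i$ than of $M_j$, so it starts and ends with $M_i$-edges. Swapping $M_i$ and $M_j$ on that component keeps both sets matchings of $\Gamma$. It moves one edge from $M_i$ to $M_j$, which strictly decreases $\sum_i|M_i|^2$. Repeating this terminates in a partition of $E(H)$ into $b$ matchings of $\Gamma$ whose sizes differ by at most one, and by the argument above these are rainbow forests.

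For the consequence, it suffices to check that $e(H)<\binom{b+2}{2}$ forces $H$ to be $b$-degenerate. If not, $H$ has a subgraph of minimum degree at least $b+1$. That subgraph has at least $b+2$ vertices, hence at least $(b+2)(b+1)/2=\binom{b+2}{2}$ edges, a contradiction. Applying the first part gives $b$ rainbow forests with sizes differing by at most one and summing to $qb$, so each has exactly $q$ edges.

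The only step needing care is the balancing. It has to be done in $\Gamma$ rather than in $H$, so that each swapped class remains a matching, and therefore remains both rainbow and "one back edge per vertex". This is where the bipartite structure is essential.
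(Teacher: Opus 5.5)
Your proof is correct and follows essentially the same route as the paper. You build the bipartite vertex--color multigraph from a degeneracy ordering, split it into $b$ matchings (via K\"onig's edge-coloring theorem, where the paper extends to a $b$-regular multigraph and applies Hall repeatedly), get acyclicity from the extreme vertex of a cycle, balance by alternating-path swaps, and use the same minimum-degree argument for the consequence, differing only cosmetically in using the mirrored convention (earlier neighbors and the latest cycle vertex instead of tails and the earliest cycle vertex).
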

\begin{proof}
Let $G$ be the $b$-degenerate graph. Orient every edge towards the
later endpoint in a degeneracy ordering. This orientation is
acyclic and has maximum outdegree at most $b$.

To encode the outdegree and color restrictions simultaneously,
form a bipartite multigraph $K$ with one part $V(G)$ and the other
the colors. Represent each directed edge of $G$ by an edge of $K$
joining its tail to its color. The degrees on the two sides are
the outdegrees in $G$ and the color-class sizes, respectively, so
$K$ has maximum degree at most $b$. By adding vertices and edges,
extend $K$ to a $b$-regular bipartite multigraph. Repeated applications
of Hall's theorem partition the extended graph into $b$ perfect
matchings. Discarding the added edges gives a partition of $E(K)$
into $b$ matchings.

Each matching corresponds to a rainbow subgraph of $G$ with
outdegree at most one. If this subgraph contains a cycle, the
earliest vertex of that cycle in the degeneracy ordering has two
outgoing edges, a contradiction. Thus each matching corresponds
to a rainbow forest.

It remains to balance the sizes. Among all partitions of $E(K)$
into $b$ matchings, choose $M_1,\ldots,M_b$ minimizing
$\sum_i|M_i|^2$. Suppose that $|M_i|\geq|M_j|+2$.
The nontrivial components of $M_i\cup M_j$ are alternating paths
and even cycles. Since $M_i$ has more edges than $M_j$, some
alternating path has one more edge of $M_i$ than of $M_j$.
Switching the two matchings along this path decreases $|M_i|$
by one and increases $|M_j|$ by one. The sum of squared sizes
therefore decreases by
$2(|M_i|-|M_j|-1)>0$, contrary to its minimality.
Hence the matching sizes, and therefore the forest sizes,
differ by at most one.

For the consequence, a graph that is not $b$-degenerate has a
subgraph of minimum degree at least $b+1$. Such a subgraph has
at least $b+2$ vertices and therefore at least $\binom{b+2}{2}$
edges. Thus the assumed edge bound makes $H$ $b$-degenerate.
The first assertion gives a balanced partition, and $e(H)=qb$
forces all $b$ forests to have exactly $q$ edges.
\end{proof}

The next criterion converts the absence of a low-outdegree
orientation into the existence of a dense vertex set.

\begin{lemma}\label{lem:hall-orientation}
Let $G$ be a graph and let $D$ be a nonnegative integer.
Then $G$ has an orientation of maximum outdegree at most $D$
if and only if $e(G[S])\leq D|S|$ for every $S\subseteq V(G)$.
\end{lemma}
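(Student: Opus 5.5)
The necessity direction is a counting argument, and sufficiency reduces to Hall's theorem on a bipartite graph between edges and copies of vertices.

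\emph{Necessity.} Suppose $G$ has an orientation with maximum outdegree at most $D$, and fix $S\subseteq V(G)$. Every edge of $G[S]$ has its tail in $S$ and is counted in that tail's outdegree. Hence
\[
e(G[S])\leq\sum_{v\in S}d^+(v)\leq D|S|.
\]

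\emph{Sufficiency.} Build a bipartite graph $\Gamma$ as follows.
\begin{itemize}
\item One side is $E(G)$.
\item The other side is $V(G)\times\{1,\ldots,D\}$, that is, $D$ copies of each vertex.
\item An edge $uv$ of $G$ is joined to all copies of $u$ and all copies of $v$.
\end{itemize}
A matching of $\Gamma$ saturating $E(G)$ gives the orientation directly: assign each edge to one of its endpoints as its tail. Each vertex is used by at most $D$ edges, since it has only $D$ copies, so the outdegree is at most $D$.

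By Hall's theorem it suffices to check $|N_\Gamma(F)|\geq|F|$ for every $F\subseteq E(G)$. Let $S$ be the set of endpoints of edges in $F$. Then $N_\Gamma(F)$ consists of all copies of the vertices in $S$, so $|N_\Gamma(F)|=D|S|$. Since $F\subseteq E(G[S])$, the hypothesis gives
\[
|F|\leq e(G[S])\leq D|S|=|N_\Gamma(F)|.
\]
So Hall's condition holds.

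The case $D=0$ needs no separate treatment. The hypothesis with $S=V(G)$ forces $G$ to have no edges, and then the empty orientation works. Alternatively, the Hall argument applies verbatim with $F=\varnothing$.

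There is no real obstacle. The only point needing care is setting up the vertex copies so that Hall's condition becomes exactly the density condition $e(G[S])\leq D|S|$.
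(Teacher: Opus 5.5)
Your proof is correct and matches the paper's argument essentially verbatim: necessity by counting tails inside $S$, and sufficiency by Hall's theorem on the bipartite graph between $E(G)$ and $D$ copies of each vertex. Your separate remark on $D=0$ is a harmless extra check that the paper leaves implicit.
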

\begin{proof}
For necessity, every edge of $G[S]$ has its tail in $S$, so
$e(G[S])\leq\sum_{v\in S}d^+(v)\leq D|S|$.

For sufficiency, form a bipartite graph $K$ with $E(G)$ on one
side and $D$ copies of each vertex of $G$ on the other. Join each
edge to every copy of its endpoints. For $F\subseteq E(G)$, let
$S$ be the set of endpoints of the edges in $F$. The neighborhood
of $F$ consists of exactly $D|S|$ endpoint-copies, and
\[
|F|\leq e(G[S])\leq D|S|=|N_K(F)|.
\]
Thus Hall's condition holds. A matching covering $E(G)$ assigns
each edge to a distinct copy of one of its endpoints. Orient
each edge from the endpoint to which it is assigned. Each vertex
is the tail of at most $D$ edges, as required.
\end{proof}

\section{Graphs with an orientation gap}\label{sec:orientation}

We use the following special case of the bipartite hypergraph
matching theorem of Delcourt and Postle.

\begingroup
\newtheorem*{dpmatching}{Theorem}
\begin{dpmatching}[Delcourt--Postle {\cite[Theorem~2.6]{DP}}]
There exist $a_*>0$ and an integer $D_0\geq1$ such that the
following holds for every integer $D\geq D_0$.
Let $\mathcal H$ be a finite multi-hypergraph with vertex
partition $A\cup B$, where each hyperedge has size at most
four and meets $A$ exactly once. Suppose that
\[
\begin{aligned}
d_{\mathcal H}(x)&\geq(1+D^{-a_*})D &&(x\in A),\\
d_{\mathcal H}(y)&\leq D &&(y\in B),\\
\operatorname{codeg}_{\mathcal H}(u,v)&\leq D^{7/8}
&&(u,v\in V(\mathcal H),\ u\ne v).
\end{aligned}
\]
Here degrees and pair codegrees count hyperedges with
multiplicity. Then $\mathcal H$ has a matching covering every
vertex of $A$.
\end{dpmatching}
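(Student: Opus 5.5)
The statement is a specialization of the bipartite (``$A$-perfect'') hypergraph matching theorem of Delcourt and Postle, so the plan is to derive it from their Theorem~2.6 rather than reprove it. Their theorem applies to an $r$-bounded multi-hypergraph on $A\cup B$ in which every hyperedge meets $A$ exactly once. Its hypotheses are a lower bound $(1+D^{-\beta})D$ on the degrees in $A$, an upper bound $D$ on the degrees in $B$, and a codegree bound $D^{1-\beta}$, where $\beta>0$ depends only on $r$ and $D$ is large. Its conclusion is a matching covering $A$. I would take $r=4$, let $\beta=\beta(4)$ be the exponent it provides, and set $a_*=\min\{\beta,1/8\}$. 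Then $D^{7/8}\le D^{1-\beta'}$ for $\beta'=\min\{\beta,1/8\}$, and $D^{-a_*}\ge D^{-\beta}$, so our hypotheses imply theirs once $D_0$ is large. The monotonicity of their conditions in $\beta$ must be checked against the exact form of their theorem. If their exponent is required to be small, decreasing $\beta$ only weakens both the degree gap and the codegree requirement, so this is harmless.

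Next I would dispose of formal discrepancies. \emph{Multiplicity:} the codegree of two vertices of a hyperedge bounds its multiplicity by $D^{7/8}$. If their theorem is stated for simple hypergraphs, I would replace each copy of a hyperedge by a distinct hyperedge with a fresh private vertex added to $B$. Each private vertex has degree one, so no degree or codegree in the old vertex set changes. The new codegrees are at most one. The rank rises by at most one, which is harmless because the constants are allowed to depend on the rank. \emph{Singleton hyperedges} $\{x\}$ with $x\in A$: these meet $A$ once and fall under the same theorem. If the cited version requires uniformity, I would pad every hyperedge to size exactly four (or five after the previous step) with fresh private $B$-vertices, again without affecting any degree or codegree condition.

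\emph{Upper degree bounds on $A$:} some formulations also need $d(x)\le$ a bounded multiple of $D$ for $x\in A$. In that case I would first delete hyperedges from each $A$-vertex of larger degree, keeping exactly $\lceil(1+D^{-a_*})D\rceil$ of them. Deleting hyperedges never increases $B$-degrees or codegrees, and it preserves the lower bound on $A$. A matching of the trimmed hypergraph is a matching of $\mathcal H$, and removing the padding vertices from its hyperedges gives a matching of $\mathcal H$ covering $A$.

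The main obstacle is purely bookkeeping: one must match the exact quantifiers and exponent conventions of \cite[Theorem~2.6]{DP}. That means confirming that the $7/8$ codegree exponent and the $D^{-a_*}$ surplus fit inside their regime for some fixed $a_*$. The substantive work, the nibble and absorption argument with its concentration estimates, is taken entirely from \cite{DP}.
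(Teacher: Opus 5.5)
Your route is the paper's: the paper gives no argument for this statement and simply quotes \cite[Theorem~2.6]{DP} with hyperedges of size at most four and codegree exponent $7/8$. The step that fails is your reconciliation of exponents. In the form you posit, one exponent $\beta=\beta(4)$ controls both the surplus $(1+D^{-\beta})D$ and the codegree bound $D^{1-\beta}$. Then the hypothesis that codegrees are at most $D^{7/8}$ implies theirs only if $\beta\le 1/8$, and nothing lets you assume that. The inequality $D^{7/8}\le D^{1-\beta'}$ with $\beta'=\min\{\beta,1/8\}$ does not help, because their theorem asks for $D^{1-\beta}$, not $D^{1-\beta'}$. Your fallback, that decreasing $\beta$ \emph{only weakens} both conditions, is also false. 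A smaller $\beta$ relaxes the codegree requirement but enlarges the required surplus $D^{-\beta}$. So a single-exponent theorem at some $\beta'<\beta$ is not a consequence of the theorem at $\beta$, and as written your derivation breaks down whenever $\beta(4)>1/8$.

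What makes the specialization immediate is the actual quantifier order in Delcourt--Postle. For every $r\ge 2$ and every $\beta\in(0,1)$ there are $\alpha=\alpha(r,\beta)>0$ and $D_\beta$ such that the following holds for $D\ge D_\beta$. Every $r$-bounded multi-hypergraph whose hyperedges meet $A$ exactly once, with codegrees at most $D^{1-\beta}$, $A$-degrees at least $(1+D^{-\alpha})D$ and $B$-degrees at most $D$, has an $A$-perfect matching. The codegree exponent is an input and the surplus exponent is an output. Taking $r=4$ and $\beta=1/8$ gives the statement verbatim with $a_*=\alpha(4,1/8)$ and $D_0=D_{1/8}$, and no monotonicity in $\beta$ is needed. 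This is why the statement fixes the exponent $7/8$ but leaves $a_*$ unspecified, and why Lemma~\ref{lem:gap} must choose $\kappa<a_*/10$. Your multiplicity, padding and trimming manipulations are correct, since they keep all relevant degrees and codegrees within bounds. They are unnecessary, however: as the paper's formulation reflects, the cited theorem already covers multi-hypergraphs with edges of bounded size and imposes only a lower bound on $A$-degrees.
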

\endgroup

\begin{lemma}\label{lem:gap}
There is a constant $\kappa\in(0,10^{-3})$ with the following property.
For every $\varepsilon>0$, there exists $m_0$ such that the
following holds for all integers $m\geq m_0$ and $1\leq k\leq2m$.
Let $G$ be a graph with $e(G)=km$ and let $c$ be a proper
edge-coloring with $\mu(G,c)\leq m$. If $G$ admits an orientation of maximum
outdegree at most $m-m^{1-\kappa}$, then it contains at least
$(1-\varepsilon)m$ edge-disjoint rainbow $k$-edge forests.
\end{lemma}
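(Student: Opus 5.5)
Fix an orientation of $G$ with maximum outdegree $D_+\leq m-m^{1-\kappa}$, and put $t=\lceil(1-\varepsilon)m\rceil$. The plan is to build $t$ edge-disjoint rainbow \emph{star forests}, each with exactly $k$ edges. Every edge is regarded as owned by its tail. Each forest $F_i$ chooses a random set of centers $C_i\subseteq V(G)$ and uses only out-edges $\vec{uv}$ with $u\in C_i$ and $v\notin C_i$. Any such edge set in which every non-center has at most one chosen in-edge is automatically a star forest. This is the reason for having a bounded outdegree: each vertex $u$ can give each forest where it is a center some of its at most $m-m^{1-\kappa}$ out-edges. Summed over forests, a vertex serves as a center about $t/2$ times if each $C_i$ is a uniform random half. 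We will choose the per-vertex and per-forest quotas so that the total demand is $tk\approx(1-\varepsilon)km$, slightly below the supply $km$.

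The assignment problem is encoded as a hypergraph $\mathcal H$ for the Delcourt--Postle theorem, after a preliminary split into heavy colors (size close to $m$) and light colors.
\begin{itemize}
\item The part $A$ consists of ``slots''. A slot is a pair (forest $i$, position), with $k$ positions per forest, possibly refined into (forest, color) slots for heavy colors so that each heavy color is used once per forest.
\item The part $B$ consists of the edges of $G$ (each used at most once), the pairs (forest $i$, color $\gamma$) (rainbowness), and the pairs (forest $i$, vertex $v$) for non-centers $v$ (at most one in-edge, so the edges form stars and not arbitrary bipartite graphs).
\item A hyperedge $\{$slot, $e$, $(i,c(e))$, $(i,v)\}$ has size four and meets $A$ once. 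It is present when $e=\vec{uv}$ with $u\in C_i$ and $v\notin C_i$.
\end{itemize}
A matching covering $A$ yields $t$ edge-disjoint rainbow star forests with exactly $k$ edges each.

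For the degree conditions, a slot of forest $i$ sees about $e(G)/4\cdot(\text{normalization})$ admissible edges. We weight or subsample so that each $A$-vertex has degree $(1+D^{-a_*})D$. Each edge $e$ lies in about $t/4$ forests' admissible sets, times the number of slots per forest, which gives degree at most $D$ after normalization. Each $(i,\gamma)$ has degree at most about $|E_\gamma|/4\leq m/4$ times the slots, and each $(i,v)$ has degree at most $d^-(v)$ times the slots. The heavy/light split is what lets the color vertices satisfy $d\leq D$. For heavy colors of size near $m$, we dedicate slots so that a heavy color is forced into every forest. The slack $m^{1-\kappa}$ in outdegree, together with $\varepsilon$, provides the $D^{-a_*}$ surplus on the $A$ side. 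We choose $\kappa$ small compared with $a_*$ so that the random fluctuations, of order $m^{1/2+o(1)}$, are absorbed. Codegrees are at most polynomially smaller than $D$ because a pair of vertices of $\mathcal H$ determines most of a hyperedge. Chernoff bounds over the random center sets (independent across forests) give that all degrees concentrate simultaneously.

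The step I expect to be the main obstacle is the in-degree constraint at the vertices $(i,v)$. A vertex $v$ with large in-degree, up to about $km/n$ or more and not controlled by the orientation, could have $B$-degree exceeding $D$. This has to be handled by capping slot usage per forest: we can replace "one in-edge" by allowing a bounded number of in-edges only for vertices of small degree. Alternatively, we can use the fact that $|E_\gamma|\leq m$ and properness bound things jointly, or reorient to balance in-degrees as well. This is possible since $\mathcal{H}$ requires only that the total demand of $k$ per forest be met. I would first try to choose the orientation minimizing $\sum_v (d^-(v))^2$ subject to the outdegree bound, and then argue that high-in-degree vertices can be made centers with probability close to $1$ (biased $C_i$). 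This bias makes their in-edges essentially unused, and the lost edges are at most $\varepsilon km/2$, because the outdegree bound implies that few edges point into such vertices.
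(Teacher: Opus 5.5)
Your framework (random centers, star forests, a Delcourt--Postle matching with dedicated heavy-color slots) is the paper's. However, your star forests point the wrong way, so the outdegree hypothesis never enters any degree condition. You make the leaves, which may meet only one selected edge, the \emph{heads}. The resource $(i,v)$ is then governed by $d^-(v)$, which is uncontrolled, and none of your remedies fixes this. Take a $km$-edge subgraph of $K_{a,N}$ with $a\approx\sqrt m$ and $N\approx k\sqrt m$, properly colored by matchings (each of size at most $a\leq m$) and oriented towards the $a$-side. Its outdegrees are about $\sqrt m$, yet every edge enters one of $a$ hubs. Moreover, every orientation with outdegrees below $m$ leaves each hub with in-degree at least $N-m$. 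In your scheme a hub that is a center makes all its edges unusable, and a non-center hub receives at most one edge per forest, so every forest has at most $a$ edges. In particular, it is false that the outdegree bound forces few edges into high-in-degree vertices. The missing idea is to reverse the roles, or equivalently to run your scheme on the reversed orientation. An edge $u\to v$ is available at index $i$ when its tail $u$ is a non-center and its head $v$ is a center, and a resource $x_{i,u}$ forbids two selected edges with tail $u$. Each leaf then has exactly one selected edge, centers may receive any number, and the degree of $x_{i,u}$ is governed by $d^+(u)\leq(1-\sigma)m$, where $\sigma\approx m^{-\kappa}$.

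Second, center density $1/2$ is fatal in either orientation. A star forest of this kind has at most $n-|C_i|$ edges. The hypothesis allows $n$ to be only about $k(1+2m^{-\kappa})$; for $k=2m$, take a roughly $(2m-4m^{1-\kappa})$-regular graph on about $2m+4m^{1-\kappa}$ vertices with a balanced orientation and an equitable proper $2m$-edge-coloring. So $|C_i|$ must be $O(m^{-\kappa}n)$. Quantitatively, with center probability $\tau$ and the correct orientation, a slot has normalized degree about $p=\tau(1-\tau)$. By contrast, $x_{i,u}$ has degree about $\tau d^+(u)/((1-\tau)m)\leq\tau(1-\sigma)/(1-\tau)$. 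The $B$-side bound therefore needs roughly $(1-\tau)^2\geq1-\sigma$, so $\tau$ can be at most about half the relative slack. The paper takes $\tau=m^{-\kappa}/1000$, and the resulting degree gap of relative order $m^{-\kappa}$ is why $\kappa$ must be small compared with $a_*$. Two further points are missing. First, your codegree claim fails for a generic light position paired with a color or tail resource. Such a pair determines only a color class or an out-neighborhood, so its weighted codegree is of order $1/q$, where $q$ is the number of light edges per forest. When $q<\sqrt m$ these edges must therefore be prepared separately, as the paper does with Lemma~\ref{lem:equitable} on vertex sets avoided by the star part. Second, the number of available light edges at an index is not a sum of independent indicators. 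Bounded differences then requires controlling vertices of degree above $m^{4/3}$. The paper deletes the edges among them, directs the remaining edges at them inward, and places them in every center set.
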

\begin{proof}
We may assume $0<\varepsilon<1$. If $k\leq m/2$, then
$km<\binom{m+2}{2}$, and Lemma~\ref{lem:equitable} gives $m$
edge-disjoint rainbow $k$-edge forests. Hence we assume $k>m/2$.
We choose random centers for each intended forest and use a
hypergraph matching to coordinate the available edges. The
matching enforces distinct tails and colors within each index
and edge-disjointness between indices. The edges selected by
the matching form star forests; when only a few additional edges
are needed, we prepare them separately.

\smallskip\noindent\emph{Parameters and preprocessing.}
Let $a_*>0$ and $D_0$ be the constants in the
Delcourt--Postle theorem stated above.
Fix $0<\kappa<\min\{10^{-3},a_*/10\}$ and put
\[
\begin{gathered}
\sigma=\tfrac12m^{-\kappa},\quad \tau=m^{-\kappa}/1000,\quad
p=\tau(1-\tau),\quad
\xi=m^{-1/48},\quad
r=\lfloor(1-\varepsilon/2)m\rfloor.
\end{gathered}
\]
Here $r$ is the target number of forests, $\sigma$ records the
outdegree slack retained after preprocessing, and $\tau$ is the
center-sampling probability. The parameter $p$ is the edge
availability probability, and $\xi$ is the concentration error.
All error constants below are absolute.

We first isolate the vertices whose center choices would have
too large an influence on the subsequent concentration estimates.
Delete isolated vertices, so that there are at most $4m^2$
vertices and $2m^2$ colors, and fix an orientation as in the
hypothesis. Let $U=\{v:d_G(v)>m^{4/3}\}$. Then
$|U|\leq4m^{2/3}$ and $e(G[U])\leq8m^{4/3}$.
Delete $E(G[U])$ and direct all remaining edges incident with
$U$ into $U$. Reorienting these edges increases any outdegree
by at most $|U|=o(m^{1-\kappa})$. For large $m$,
$m-m^{1-\kappa}+|U|\leq m-\tfrac12m^{1-\kappa}$.
Thus the resulting graph $G_0$ has maximum outdegree at most
$(1-\sigma)m$, every vertex outside $U$ has degree at most
$m^{4/3}$, and every vertex in $U$ has outdegree zero.

\smallskip\noindent\emph{Heavy and light colors.}
Write $s_\gamma$ for the size of color $\gamma$ in the original
graph $G$. Call a color heavy if $s_\gamma>(1-\tau)m$.
If there are at least $k$ heavy colors, choose $k$ of them and
put $h=k$, $q=0$. Otherwise take all heavy colors, let their
number be $h$, and put $q=k-h$. Only in this latter case do we
call the other colors light. Their total size is at least $qm$,
since the heavy colors contain at most $hm$ edges, and each
light color has size at most $(1-\tau)m$.
We prescribe one edge of each of the $h$ selected heavy colors
in every forest. The parameter $q$ is the number of additional
light edges needed in each forest.

When $0<q<\sqrt m$, we prepare the light edges separately so
that the hypergraph matching only has to select the heavy part.
Choose $qm$ light edges from the original graph $G$.
For large $m$, $qm<\binom{m+2}{2}$, so
Lemma~\ref{lem:equitable} partitions these edges into $m$
rainbow $q$-edge forests. Retain $r$ of them, denoted by $P_i$,
and let $W_i$ be the set of endpoints of their edges.
Then $|W_i|<2\sqrt m$. If $q=0$, put
$P_i=W_i=\varnothing$.

When $q\geq\sqrt m$, the hypergraph matching also selects the
light edges. Let $L_0$ be the light-edge subgraph of $G_0$ and
put $M=e(L_0)$. Deleting $E(G[U])$ loses at most $8m^{4/3}$
light edges, and this loss is $o(qm)$ in the present regime.
Consequently,
\begin{equation}\label{eq:light-M}
M\geq qm-8m^{4/3}\geq qm/2,\qquad
q/M\leq(1+O(m^{-1/6}))/m.
\end{equation}

\smallskip\noindent\emph{Random centers and available edges.}
For each $i\in[r]$, let $C_i$ be a set of centers obtained by
including $U$ and independently including every other vertex
with probability $\tau$. An edge $u\to v$ of $G_0$ is available
at $i$ when $u\notin C_i$ and $v\in C_i$. If $v\in U$,
additionally require a Bernoulli variable of mean $\tau$ to be
one. Use a separate such variable for each edge and each index;
these variables are mutually independent and independent of all
center choices. This extra thinning gives every edge the same
availability probability $p$, including edges whose heads lie
in $U$.

Availability indicators within a color are independent, since
a proper color class is a matching. Conditional on $u$ not
being a center, the indicators for edges with tail $u$ are
independent with mean $\tau$: their heads are distinct, and
edges directed into $U$ use separate Bernoulli variables.
All choices for distinct indices are independent.
Any selection of available edges that uses each tail at most
once is a star forest. Indeed, every selected edge joins a
noncenter to a center, and each noncenter has degree at most one.
This is the tail restriction imposed by the hypergraph matching.

\smallskip\noindent\emph{The hypergraph encoding.}
We encode the simultaneous selections in a weighted bipartite
hypergraph $\mathcal H$ with parts $A,B$, with each hyperedge
meeting $A$ once. Vertices in $A$ represent the requirements
for each forest; vertices in $B$ represent resources that a
matching uses at most once.

For each index $i$ and selected heavy color $\gamma$, put
$a_{i,\gamma}\in A$ to represent the requirement of choosing
one edge of color $\gamma$ for forest $i$.
If $q\geq\sqrt m$, also put $b_{i,j}\in A$ for
$i\in[r]$, $j\in[q]$, representing the $q$ light-edge positions
in forest $i$. In $B$, put a vertex $z_e$ for each original
edge $e$ to prevent the same edge from being used twice, and
a vertex $x_{i,u}$ for each index and possible tail to prevent
two selected edges at index $i$ from having tail $u$.
When $q\geq\sqrt m$, also put $y_{i,\gamma}\in B$ for each
index and light color to prevent two light edges of color
$\gamma$ from appearing in forest $i$.

An available edge $e=u\to v$ of a selected heavy color $\gamma$
gives the hyperedge
\[
\{a_{i,\gamma},z_e,x_{i,u}\}\quad\text{of weight }1/s_\gamma.
\]
For $q<\sqrt m$, omit this hyperedge if $e$ meets $W_i$;
the selected heavy part then avoids every vertex of $P_i$.
When $q\geq\sqrt m$, an available light edge $e=u\to v$
of color $\gamma$ gives, for every $j\in[q]$, the hyperedge
\[
\{b_{i,j},z_e,x_{i,u},y_{i,\gamma}\}\quad\text{of weight }1/M.
\]
A weighted degree is the sum of weights over hyperedges
containing a vertex, and a weighted pair codegree is the sum
over hyperedges containing both vertices.

\smallskip\noindent\emph{Degree and codegree estimates.}
To apply the matching theorem, we need a degree gap between
the requirement vertices in $A$ and the resource vertices in
$B$, together with small pair codegrees.
We first verify
\begin{equation}\label{eq:gap-degrees}
d_w(x)\geq p-O(\xi)\quad(x\in A),\qquad
d_w(y)\leq p(1-\tau)+O(\xi)\quad(y\in B).
\end{equation}

For a selected heavy color, deleting edges inside $U$ and,
when required, omitting edges meeting $W_i$ excludes at most
$O(m^{2/3})$ edges, because the color class is a matching.
The excluded weight is absorbed by $O(\xi)$.
Independent availability indicators then give weighted degree
$p+O(\xi)$ at $a_{i,\gamma}$ with failure probability
$2\exp(-\Omega(\xi^2m))$.

For a light position $b_{i,j}$, the weighted degree is $N_i/M$,
where $N_i$ is the number of available light edges at index $i$
and has mean $pM$. Changing the center choice at a vertex
$v\notin U$ changes $N_i$ by at most $d_{L_0}(v)$, and changing
an extra Bernoulli variable changes it by at most one.
Thus the sum of squared changes in bounded differences is at most
\[
\sum_{v\notin U}d_{L_0}(v)^2+M
\leq2m^{4/3}M+M\leq3m^{4/3}M.
\]
Hence
\[
\Prb(|N_i-pM|>\xi M)
\leq2\exp\{-2\xi^2M/(3m^{4/3})\}
\leq2\exp(-\Omega(m^{1/8})).
\]
This proves the required lower bounds on $A$.

For a fixed edge resource $z_e$, Chernoff's inequality and
independence over indices give at most $pr+\xi m$ available
indices, except with probability at most
$2\exp(-\Omega(\xi^2m))$.
Its weighted degree is at most
$p(1-\varepsilon/4)+O(\xi)$ for heavy edges, using
$s_\gamma>(1-\tau)m$, and at most
$p(1-\varepsilon/2)+O(\xi)$ for light edges, using
\eqref{eq:light-M}.

For a tail resource $x_{i,u}$, the weighted degree is zero
when $u$ is a center. Otherwise, condition on $u$ not being
a center. An outgoing heavy edge contributes a coefficient
$1/s_\gamma$, and an outgoing light edge contributes $q/M$,
since it gives one hyperedge for each light position.
All coefficients are at most $1/((1-\tau)m)$, since
$m^{-1/6}=o(\tau)$. The conditional expectation is at most
$\tau(1-\sigma)/(1-\tau)\leq p(1-\sigma/2)$, since
$\tau=\sigma/500$. The sum of squared coefficients is $O(1/m)$.
The conditional independence
of outgoing availability indicators therefore bounds the
deviation by $\xi$ with failure probability
$2\exp(-\Omega(\xi^2m))$.

For a fixed light color resource $y_{i,\gamma}$, Chernoff's
inequality and independence within the color give at most
$ps_\gamma+\xi m$ available edges at index $i$, except with
probability at most $2\exp(-\Omega(\xi^2m))$.
Each such edge contributes $q/M$
to the color resource $y_{i,\gamma}$. Using
$s_\gamma\leq(1-\tau)m$ and \eqref{eq:light-M} gives
$d_w(y_{i,\gamma})\leq p(1-\tau)+O(\xi)$.
There are $O(m^3)$ quantities under consideration, so a union
bound makes all the preceding concentration estimates hold
simultaneously with probability tending to one.
Since $\sigma/2>\tau$ and $\tau<\varepsilon/4$ for large $m$,
they imply \eqref{eq:gap-degrees}. Fix an outcome satisfying
these estimates.

We next bound pair codegrees. Each vertex of $A$ belongs to at
most $2m^2$ hyperedges. Heavy hyperedges contribute $O(1/m)$
to the weighted codegree of any pair, by properness.
For light hyperedges, a pair involving $b_{i,j}$ has codegree
at most $1/M$, $d^+(u)/M$, or $s_\gamma/M$, according as its
other vertex is $z_e$, $x_{i,u}$, or $y_{i,\gamma}$.
Pairs of resource vertices have light codegree at most $q/M$;
in particular, properness leaves at most one original edge
with a given tail and color. These light codegree bounds
are $O(m^{-1/2})$ by \eqref{eq:light-M}.

\smallskip\noindent\emph{Applying the matching theorem.}
We now turn the weighted degree gap into the integer degree
bounds required by the Delcourt--Postle theorem. Since $\xi=o(p\tau)$,
the lower and upper bounds in \eqref{eq:gap-degrees} are
respectively at least $a=p(1-\tau/4)$ and at most
$b=p(1-\tau/2)$.
Replace each hyperedge of weight $w$ by $\lfloor m^3w\rfloor$
parallel copies to obtain a multi-hypergraph $\mathcal H'$,
and put $D=\lceil(a+b)m^3/2\rceil$.
Rounding loses at most one copy per hyperedge. Thus the degrees
on $A$ are at least $am^3-2m^2$, those on $B$ are at most
$bm^3$, and pair codegrees are $O(m^{5/2})$.
Now $D=\Theta(m^{3-\kappa})$,
$am^3-2m^2\geq(1+D^{-a_*})D$, and
$O(m^{5/2})\leq D^{7/8}$, by the choice of $\kappa$.
Indeed, $am^3-2m^2-D=\Theta(m^{3-2\kappa})$, and our choice
of $\kappa$ ensures that $3-2\kappa>(3-\kappa)(1-a_*)$
and $5/2<7(3-\kappa)/8$.
In particular, the degrees on $B$ are at most $D$.
Every hyperedge of $\mathcal H'$ has size at most four and
meets $A$ exactly once, and $D\geq D_0$ for sufficiently large $m$.
The Delcourt--Postle theorem stated above therefore gives a
matching $\mathcal M$ covering every vertex of $A$.

For each $i$, let $S_i$ be the subgraph of $G_0$ formed by
the original edges encoded by matching hyperedges with index
$i$. Covering each $a_{i,\gamma}$ selects exactly one edge of
every selected heavy color. When $q\geq\sqrt m$, covering
$b_{i,1},\ldots,b_{i,q}$ selects exactly $q$ light edges, whose
colors are distinct because the resources $y_{i,\gamma}$ are
used at most once. The selected heavy colors are distinct
and disjoint from the selected light colors, so $S_i$ is rainbow.
Each selected edge goes from a noncenter to a center, and
$x_{i,u}$ ensures that each noncenter is used at most once
as a tail. Hence $S_i$ is a star forest.
The edge resources $z_e$ make the $S_i$ pairwise edge-disjoint.

When $q\geq\sqrt m$, each $S_i$ already has $h+q=k$ edges.
When $q<\sqrt m$, take $S_i\cup P_i$. For $q=0$, this is
just $S_i$, since $P_i=\varnothing$.
For $0<q<\sqrt m$, the two subgraphs use disjoint vertices
by the omission of edges meeting $W_i$, and their colors are
disjoint because $S_i$ uses selected heavy colors and $P_i$
uses light colors. Their union is therefore a rainbow forest
with $h+q=k$ edges.
The $P_i$ are pairwise edge-disjoint, and no $P_i$ shares an
edge with any $S_j$, since their colors belong to disjoint
heavy and light classes.
We obtain $r$ edge-disjoint rainbow $k$-edge forests, and
$r\geq(1-\varepsilon)m$ for large $m$, as required.
\end{proof}

\section{Vertex sets containing almost all edges}\label{sec:core}

Failure of the orientation condition in Lemma~\ref{lem:gap}
forces almost all edges into a vertex set of size close to $k$.
The next lemma also controls the neighborhoods of outside vertices.

\begin{lemma}\label{lem:core}
Fix the constant $\kappa$ of Lemma~\ref{lem:gap}. For all sufficiently
large integers $m$ and all integers $1\leq k\leq2m$, the following
holds. Put $h=m^{1-\kappa}$ and let $G$ be a graph with $e(G)=km$.
If $G$ has no orientation of maximum outdegree at most $m-h$,
then $k\geq2m-6h$, and there is a set $R\subseteq V(G)$ such that
\begin{equation}\label{eq:core}
\begin{gathered}
k\leq|R|\leq k+500h\leq3m,\qquad
|E(G)\setminus E(G[R])|\leq50hm,\\
d_G(v,R)<m/10\quad(v\notin R).
\end{gathered}
\end{equation}
\end{lemma}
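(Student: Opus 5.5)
The plan is to apply Lemma~\ref{lem:hall-orientation} to obtain one dense set $S$, read off both the lower bound on $k$ and the size bounds from the density of $S$, and then enlarge $S$ to a set $R$ that is closed under adding vertices with many neighbours in it. Throughout, $m$ is large, so $h=m^{1-\kappa}=o(m)$ and lower-order terms can be absorbed into multiples of $h$.

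\emph{Step 1: a dense set.} Put $D=\lfloor m-h\rfloor\geq m-h-1$. Since $G$ has no orientation of maximum outdegree at most $m-h$, it has none of maximum outdegree at most $D$. Lemma~\ref{lem:hall-orientation} then gives $S\subseteq V(G)$ with $e(G[S])>D|S|\geq(m-h-1)|S|$.

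\emph{Step 2: size bounds for $S$.} Comparing with $e(G[S])\leq\binom{|S|}{2}$ gives $|S|-1>2(m-h-1)$, so $|S|\geq2m-2h-1$. Comparing with $e(G[S])\leq e(G)=km$ gives
\[
|S|<\frac{km}{m-h-1}=k\Bigl(1+\frac{h+1}{m-h-1}\Bigr)\leq k+5h,
\]
using $k\leq2m$. Combining the two bounds gives $k>|S|-5h\geq2m-7h-1$. This is slightly weaker than the claimed $k\geq2m-6h$, so the estimate has to be sharpened. For instance, $|S|<k+2h+O(h^2/m)$ follows from $k\leq 2m$ and $h^2/m=o(h)$. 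Then $k\geq2m-4h-O(1)\geq 2m-6h$. Next, the number of edges outside $G[S]$ is
\[
km-e(G[S])<km-(m-h-1)(2m-2h-1)\leq2m^2-2m^2+O(hm)\leq 5hm.
\]

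\emph{Step 3: padding to size $k$.} If $|S|<k$, add arbitrary further vertices to reach size exactly $k$; this adds at most $2h+1$ vertices. Enough vertices exist: a simple graph on at most $k$ vertices has at most $\binom k2<k^2/2\leq km$ edges, so $|V(G)|>k$. Adding vertices to $S$ only moves edges inside, so the count of outside edges does not increase.

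\emph{Step 4: closure.} Starting from the padded set, repeatedly add any vertex $v\notin R$ with $d_G(v,R)\geq m/10$. Each addition moves at least $m/10$ edges from $E(G)\setminus E(G[R])$ into $E(G[R])$. The outside edges started below $5hm$, so there are at most $50h$ additions. The final $R$ therefore satisfies $d_G(v,R)<m/10$ for every $v\notin R$. It also satisfies $|E(G)\setminus E(G[R])|\leq 5hm\leq50hm$ and $k\leq|R|\leq k+5h+50h\leq k+500h\leq3m$, the last inequality because $h=o(m)$ and $k\leq 2m$. The closure must be done after the padding, so that the degree condition is not destroyed by later additions.

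I expect the main obstacle to be the careful bookkeeping in Step 2. This means getting both the upper bound $|S|\leq k+O(h)$ with a constant small enough to give $k\geq2m-6h$, and the $O(hm)$ bound on outside edges. Both come from the two-sided squeeze of $e(G[S])$ between $(m-h-1)|S|$ and $\min\{\binom{|S|}2,km\}$.
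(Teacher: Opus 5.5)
Your proposal is correct and follows the paper's proof: Lemma~\ref{lem:hall-orientation} with $D=\lfloor m-h\rfloor$, the squeeze $2D+1<|S|<km/D$ giving $k\geq2m-6h$ and $O(hm)$ outside edges, padding to $k$ vertices, and the closure process. The one difference is that you keep $S$ when $|S|\geq k$, whereas the paper trims it to exactly $k$ vertices at a cost of up to $33hm$ further outside edges; keeping $S$ is legitimate because the statement only requires $k\leq|R|\leq k+500h$. Also, your displayed estimate gives $k\geq2m-4h-o(h)$ rather than $2m-4h-O(1)$, which still suffices, and in fact $k>D(2D+1)/m\geq2m-4h-3$ holds directly.
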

\begin{proof}
Apply Lemma~\ref{lem:hall-orientation} with $D=\lfloor m-h\rfloor$.
There is a set $S$ with $e(G[S])>D|S|$. Simplicity and the total
edge count give $2D+1<|S|<km/D$. Since $D\geq m-h-1$, these
inequalities imply $2m-3h<|S|<2m+5h$ and $k\geq2m-6h$ for
large $m$. The same bounds give
$|E(G)\setminus E(G[S])|\leq7hm$.

We next adjust $S$ to a set $Q$ of exactly $k$ vertices.
There are at least $k$ vertices, since $km>\binom{k}{2}$.
If $|S|\leq k$, enlarge $S$ arbitrarily to a $k$-vertex set
$Q$; this can only decrease the number of edges outside
the induced subgraph. Otherwise, remove $|S|-k<11h$ vertices
from $S$. Since $|S|<3m$, each removed vertex is incident
with fewer than $3m$ edges of $G[S]$. Hence in either case
\[
|E(G)\setminus E(G[Q])|\leq7hm+33hm\leq50hm.
\]

We now enlarge $Q$ to control the outside neighborhoods.
Repeatedly add any outside vertex with at least $m/10$
neighbors in the current set. Each addition brings in at
least $m/10$ previously external edges. Since there are
initially at most $50hm$ external edges, there are at most
$500h$ additions. Let $R$ be the final set.
Adding vertices preserves the bound on external edges, and the
stopping rule gives $d_G(v,R)<m/10$ for $v\notin R$.
Together with $k\leq|R|\leq k+500h\leq3m$ for large $m$,
this proves \eqref{eq:core}.
\end{proof}

For a nonempty vertex set $U$ with $|U|\leq k$, define
\[
F_U=E(G)\setminus E(G[U]),\qquad q_U=|F_U|/m,
\qquad j_U=k-|U|+1.
\]
A $k$-edge forest must contain at least $j_U$ edges from $F_U$.
By simplicity,
\begin{equation}\label{eq:rank-slack}
\begin{aligned}
q_U-j_U
&=\frac{km-e(G[U])}{m}-(k-|U|+1)\\
&\geq |U|-1-\frac{\binom{|U|}{2}}{m}
=\frac{(|U|-1)(2m-|U|)}{2m}.
\end{aligned}
\end{equation}
The last expression is nonnegative since $|U|\leq k\leq2m$,
so $q_U\geq j_U$.

The next lemma bounds the number of sets with small $q_U$,
as needed for the union bounds in Section~\ref{sec:sampling}.

\begin{lemma}\label{lem:count}
Let $m,k$ be positive integers with $1.9m\leq k\leq2m$,
and let $G$ be a graph on $N\geq2$ vertices with $e(G)=km$.
For every real $Q\geq1$, the number of nonempty sets
$U\subseteq V(G)$ with $|U|\leq k$ and
$|E(G)\setminus E(G[U])|\leq Qm$ is at most $N^{50Q}$.
\end{lemma}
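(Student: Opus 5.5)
The plan is to show that every set $U$ in question is determined by a small amount of data relative to one fixed vertex set depending only on $G$. Let
\[
H=\{v\in V(G):d_G(v)\geq m/10\}.
\]
Every edge with an endpoint outside $U$ lies in $F_U=E(G)\setminus E(G[U])$. Hence
\[
\sum_{v\notin U}d_G(v)\leq2|F_U|\leq2Qm.
\]
This gives $|H\setminus U|\leq20Q$. So $U$ is obtained from $H$ by deleting at most $20Q$ vertices and adding the set $T=U\setminus H$ of low-degree vertices. It then suffices to prove $|T|\leq cQ$ for a small absolute constant $c$ (say $c\leq2$).

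First I dispose of large $Q$. Every counted $U$ has $|U|\leq k\leq2m$. Hence there are at most $\sum_{s\leq2m}\binom Ns\leq N^{2m+1}$ such sets, and this is at most $N^{50Q}$ once $Q\geq m/20$ (using $N\geq2$). So I will assume $Q<m/20$.

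To bound $t=|T|$, write $u=|U|\leq k$ and compare edge counts. On the one hand,
\[
e(G[U])\geq km-Qm.
\]
On the other hand, the edges of $G[U]$ inside $U\cap H$ number at most $\binom{u-t}{2}$, and those touching $T$ number at most $tm/10$. Hence
\[
e(G[U])\leq f(t):=\binom{k-t}{2}+\frac{tm}{10}.
\]
The function $f$ is a convex quadratic in $t$ on $[0,k]$. At $t=0$ we have $f(0)=\binom k2<km$, because $k\leq2m$. At the other end, $f(t)\leq\binom{k-t}2+km/10$ is far below $km-Qm$ when $t$ is close to $k$, because $Q<m/20$.

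The main step is turning this into a clean linear bound. Near $t=0$ the slope of $f$ is about $-(k-t)+m/10$. This is at most $-1.7m$ as long as $t\leq0.1m$, using $k\geq1.9m$. This gives
\[
km-Qm\leq\binom k2-1.7m\,t<km-1.7m\,t,
\]
so $t<Q/1.7<Q$ in that range. For $t>0.1m$ I would check directly that $f(t)<km-Qm$. By convexity of $f$ it suffices to check the two endpoints $t=0.1m$ and $t=k$. At $t=0.1m$ the value is roughly $\tfrac12(1.8m)^2+0.01m^2<1.9m^2-m^2/20\leq km-Qm$. At $t=k$ the value is $km/10$. Checking these estimates carefully, using $1.9m\leq k\leq2m$, is the only delicate arithmetic, and it is where I expect to spend care. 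It is also why the lemma assumes $k\geq1.9m$.

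Finally I count. The set $H$ is fixed by $G$. The deleted set $H\setminus U$ has size at most $20Q$, and $T$ has size at most $Q$. Choosing each of them gives at most $(N+1)^{20Q}$ and $(N+1)^{Q}$ options respectively. The total is at most $(N+1)^{21Q+2}\leq N^{50Q}$ for $N\geq2$ and $Q\geq1$. This proves Lemma~\ref{lem:count}.
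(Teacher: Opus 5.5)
Your proof is correct, but it is anchored differently from the paper's. The paper fixes one member $U_0$ of the family and derives $u_0>m$ and $k-u_0\le Q-1$ from \eqref{eq:rank-slack}. It then observes that $G[U_0]$ misses at most $Qm$ edges of the complete graph on $U_0$. This near-completeness forces $|U_0\setminus U|\le4Q$ for any other member $U$, and $|U|\le k$ then gives $|U\setminus U_0|\le5Q$, for a count of $N^{11Q}$.

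You instead anchor at the set $H$ of vertices of degree at least $m/10$, which is determined by $G$ alone. Your bound $|H\setminus U|\le20Q$ is a degree-sum argument that uses only $|F_U|\le Qm$. The hypothesis $1.9m\le k\le2m$ enters only when you bound the low-degree part $T=U\setminus H$ through the convex function $f(t)=\binom{k-t}{2}+tm/10$.

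The paper's route is shorter, reuses an inequality it needs anyway, and gives a better exponent. Yours avoids choosing a member of the family, so no separate treatment of an empty family is needed. It also exhibits every counted set as a small perturbation of one canonical set. The cost is a slightly fussier convexity computation and a weaker, but still sufficient, exponent.

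Two small points on the arithmetic you flagged. First, for $t\le0.1m$ the slope is $f'(t)=-(k-t)+\tfrac12+m/10\le-1.7m+\tfrac12$, not $-1.7m$. This still gives $t<Qm/(1.7m-\tfrac12)<Q$. Second, at $t=0.1m$ the worst case is $k=2m$, not $k=1.9m$. The chain still holds:
$f(0.1m)\le\tfrac12(1.9m)^2+0.01m^2=1.815m^2<1.85m^2<km-Qm$.
The left inequality uses $k\le2m$, and the right uses $k\ge1.9m$ and $Q<m/20$. Together with $f(k)=km/10<km-Qm$ and convexity, this rules out $t>0.1m$ for every admissible $k$.
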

\begin{proof}
For $Q\geq m/10$, the number of sets is at most
$N^{k+1}\leq N^{2m+1}\leq N^{50Q}$.
Otherwise $m>10$. We count the sets by comparing them with
one fixed member. If the family is nonempty, fix one member
$U_0$ and put $u_0=|U_0|$. Since
$e(G[U_0])\geq(k-Q)m>1.8m^2$, we have $u_0>m$.
The bound $q_{U_0}\leq Q$ and \eqref{eq:rank-slack} give
$k-u_0\leq Q-1$. The number of missing edges in the complete
graph on $U_0$ is at most $Qm$, since
\[
\binom{u_0}{2}-e(G[U_0])
\leq km-(k-Q)m=Qm.
\]

For another member $U$, let $a=|U_0\setminus U|$.
Every edge of $G[U_0]$ incident with $U_0\setminus U$ lies
outside $G[U]$. Counting the corresponding pairs on $U_0$
and subtracting at most $Qm$ missing edges gives
\[
a\bigl(u_0-(a+1)/2\bigr)-Qm\leq Qm.
\]
Since $a\leq u_0$ and $u_0-1\geq m$, this implies $a\leq4Q$.
Also $|U\setminus U_0|\leq a+k-u_0\leq5Q$.
Choosing the removed and added vertices therefore gives at most
$N^{4Q+1}N^{5Q+1}\leq N^{11Q}\leq N^{50Q}$ possibilities.
\end{proof}

\section{A tree in a dense core and one additional edge}\label{sec:paired}

For a specified vertex set $C$, we call $G[C]$ the core and
refer to edges in $E(G)\setminus E(G[C])$ as outside edges.
We first construct and iterate a sampler with prescribed core
and outside edge counts, then reduce the critical $k=2m$
configuration to this setting.

\begin{lemma}\label{lem:paired}
For every $\alpha\in(0,1/2)$ and $\varepsilon>0$, there exist
$\delta>0$ and $n_0$ with the following property. Let $G$ be a
graph with a proper edge-coloring $c$, let $C\subseteq V(G)$ have
size $n\geq n_0$, and let $m\geq\alpha n$ be an integer. Suppose that
\[
e(G[C])=(n-1)m,\qquad |E(G)\setminus E(G[C])|=m,
\qquad \mu(G,c)\leq m,
\]
and, writing $H=G[C]$, that
\[
\begin{gathered}
|d_H(v)-2m|\leq\delta n\quad(v\in C),\qquad
|\cdg_H(u,v)-4m^2/n|\leq\delta n\quad(u,v\in C,\ u\ne v).
\end{gathered}
\]
Then $G$ contains at least $(1-\varepsilon)m$ edge-disjoint rainbow
forests, each consisting of a spanning tree of $H$ and one edge
outside $H$.
\end{lemma}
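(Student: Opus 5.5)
We would prove Lemma~\ref{lem:paired} by an iterative random-sampling scheme built on the matroid-intersection variant from Section~\ref{sec:preliminaries}. We run $T=(1-\varepsilon)m$ rounds. In round $t$ we have a current graph $G_t$, obtained from $G$ by deleting the forests already chosen, and a current count $m_t=m-t$. In this graph the core has $(n-1)m_t$ edges, there are $m_t$ outside edges, and every color class has size at most $m_t+o(m)$. The last bound needs care: a color class of size $m$ must lose an edge in essentially every round.

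In each round we define a fractional point $x$. On the core we put $x_e=(n-1)/e(H_t)=1/m_t$ for every core edge, and on the outside edges we put $x_e=1/m_t$ as well, so the outside total is one. We then check the three conditions of the variant.

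First, the color constraint. The color weight is $|E_\gamma\cap G_t|/m_t$. It is at most one exactly when no color class exceeds $m_t$. We therefore track color sizes and enforce the constraint by treating colors of current size close to $m_t$ as ``critical''. To do this we would slightly reweight, lowering $x$ on the edges of non-critical colors and raising it on critical ones, or simply scale $x$ by $(1-\eta)$ and add a small correction.

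Second, the graphic inequalities $x(E(H_t[S]))\leq|S|-1$ for $S\subseteq C$. This is where the degree and codegree pseudorandomness enters. For small $|S|$, simplicity gives $e(H_t[S])\leq\binom{|S|}{2}$, so the weight is at most $\binom{|S|}{2}/m_t$, and this is at most $|S|-1$ whenever $|S|\leq 2m_t$. For large $S$, the codegree condition yields an expander mixing bound: $\|A_H-(2m/n)J\|$ is small, of order $\sqrt{\delta}\,n$. Hence $e(H[S])\approx (m/n)|S|^2$, which is below $m_t(|S|-1)$ except when $S$ is nearly all of $C$. In that regime we instead count the edges leaving $S$, using the minimum degree $\approx 2m$. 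To gain room we first multiply the core weights by $1-\eta$ and put the missing mass on a near-uniform perturbation.

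Matroid intersection then gives a random rainbow forest $F_t$, consisting of a spanning tree of $H_t$ plus one outside edge, whose edge marginals are $x$.

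The heart of the argument is showing that the hypotheses survive deletion. We track the following quantities.
\begin{itemize}
\item[(i)] The degrees $d_{H_t}(v)$. A spanning tree removes edges at $v$ with expected number $d_{H_t}(v)/m_t\approx 2$, so $d_{H_t}(v)$ should stay close to $2m_t$.
\item[(ii)] The codegrees, which should shrink like $4m_t^2/n$.
\item[(iii)] The color-class sizes, which should stay at most $m_t$ plus a small error.
\item[(iv)] The matrix $A_{H_t}-\tfrac{2m_t}{n}J$, whose norm should stay small.
\end{itemize}
For each quantity we subtract the predictable drift and apply Freedman's inequality~\eqref{eq:freedman}: the scalar version for (i)--(iii), and the matrix version of dimension $n$ for (iv). The increments are bounded, since a tree changes any degree by at most $n$, but we would use that the expected change is $O(1)$ and the conditional variance is $O(1)$ per round. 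We stop the process when any tracked error exceeds $\delta' n$ with $\delta'\gg\delta$ but still small in terms of $\varepsilon$. Union bounds over $O(n^2)$ pairs and colors then show that the process reaches round $(1-\varepsilon)m$ with positive probability.

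We expect the main obstacle to be the color constraint near saturation. Colors of size exactly $m$ must be hit in every single round, and their slack decreases deterministically. So instead of relying only on concentration, we would build into $x$ an exact requirement that every color of current size $m_t$ receives weight exactly one. This is allowed, because the intersection polytope permits equality in the color constraints. We would then verify that raising those weights does not violate the graphic inequalities, again using properness (each color is a matching, contributing at most $|S|/2$ edges to $H_t[S]$) together with the spectral bound on $S$.
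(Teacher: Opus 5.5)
Your fractional point is not the issue. With $x_e=1/m_t$ on all current edges and the invariant $\mu(G_t,c)\le m_t$ held exactly (not up to $o(m)$), every color has weight at most one. A color of size $m_t$ has weight exactly one, so it appears in every forest of the convex combination, and the color bound passes to $m_t-1$ deterministically. The graphic inequalities also follow deterministically from $1.9m_t\le d_{H_t}(v)\le 2.1m_t$ and $e(H_t[S])\le 0.6m_t|S|$ for $|S|\le n/2$, as you sketch. So no slack, rescaling or reweighting is needed.

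\textbf{The gap is in the tracking step.} Matroid intersection prescribes only the edge marginals of $F_t$; otherwise the decomposition it returns is arbitrary. You therefore have no control over $\Delta(T_t)$ or over the joint probabilities $\Prb(uw,vw\in T_t)$.

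\emph{Degrees.} Your claim that $d_{T_t}(v)$ has conditional variance $O(1)$ is unsupported: the marginals only give $\E d_{T_t}(v)^2\le(n-1)\E d_{T_t}(v)=O(n)$. Even granting it, the increments are bounded only by $R=n$. Freedman's inequality at deviation $u=\delta' n$ then has exponent at most $u^2/(2Ru/3)=3\delta'/2$. The failure probability is bounded away from zero, so the union bound over vertices collapses.

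\emph{Codegrees.} Here $\E c_{t+1}(u,v)=(1-2/m_t)c_t(u,v)+J_t$ with $J_t=\sum_w\Prb(uw,vw\in T_t)$. The only a priori bound is $J_t\le n/m_t$, which lets the normalized codegree $nc_t(u,v)/m_t^2$ accumulate a drift of order one over the process.

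\emph{Matrix.} The same pair probabilities determine $\|\E A_{T_t}^2\|$, so tracking the matrix in (iv) directly hits the same obstacle.

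\textbf{The missing idea is to restrict the support before rounding.} In each step the paper samples a sparse random support $J$. The core and outside portions of every large color are thinned uniformly at rate about $p=(\log n)^{20}/s$. Small colors are pooled and thinned independently, or kept if the pool is tiny. Weights are rescaled so that each portion keeps its total weight. This preserves the core total $n-1$, the outside total one and weight one on colors of size $s$, with marginals $1/s\pm n^{-50}$. Chernoff bounds then verify the graphic inequalities for the weights on $J$.

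Because the forest is drawn inside $J$, it inherits two bounds:
\[
\Delta(T)\le 2n/(\log n)^8,\qquad \sum_w\Prb(uw,vw\in T)\le\theta_n=o(1).
\]
The second uses the independence of support indicators across colors. Together they make the increments and total variance of the normalized degrees and codegrees $O((\log n)^{-8})$, and the codegree drift $o(1)$. This is exactly what Freedman's inequality and the union bound need. The spectral estimate is afterwards recovered from degrees and codegrees, rather than tracked as a matrix martingale.
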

\begin{proof}
We may assume $0<\varepsilon<1$. Simplicity gives $m\leq n/2$.
We construct a sampler for one tree and one outside edge,
then iterate it while tracking normalized degrees and codegrees.

\smallskip\noindent\emph{Initial estimates.}
We first convert the degree and codegree assumptions into
edge-density bounds needed by the sampler.
Put $a_0=2m/n$ and let $J_n$ be the all-ones matrix.
On the subspace orthogonal to the all-ones vector, the codegree
assumptions give
\[
\|A_Hx\|^2=x^{\mathsf T}A_H^2x
\leq(\delta n^2+O_\alpha(n))\|x\|^2.
\]
Moreover, $\|(A_H-a_0J_n){\bf 1}\|\leq\delta n\sqrt n$.
Decomposing a vector into its all-ones and orthogonal components
therefore gives
\[
\|A_H-a_0J_n\|\leq C_\alpha\sqrt{\delta}\,n+O_\alpha(\sqrt n).
\]
Taking $\delta$ sufficiently small in terms of $\alpha$, we have
$1.9m\leq d_H(v)\leq2.1m$ and, for every
$S\subseteq C$ with $|S|\leq n/2$,
\[
e(H[S])\leq m|S|^2/n+0.05m|S|\leq0.55m|S|.
\]

\smallskip\noindent\emph{The sampling step.}
Fix $\beta>0$ and suppose the current graph has $(n-1)s$ core
edges and $s$ outside edges, where $s\geq\beta n$ is an integer.
In this step, $H$ denotes the current graph induced by $C$.
Assume that every color has at most $s$ edges and that
\begin{equation}\label{eq:paired-state}
1.9s\leq d_H(v)\leq2.1s,\qquad
e(H[S])\leq0.6s|S|\quad(|S|\leq n/2).
\end{equation}
We construct a random rainbow forest $P=T\cup\{e\}$, where
$T$ is a spanning tree of the core and $e$ is an outside edge.
Every current edge has marginal $1/s\pm n^{-50}$, every color
class of size $s$ is used, and
\begin{align}
\Delta(T)&\leq D:=2n/(\log n)^8,\label{eq:paired-D}\\
\sum_w\Prb(uw,vw\in T)&\leq
C_\beta\bigl((\log n)^{-8}+(\log n)^{40}/n\bigr)=:\theta_n
\quad(u\ne v).\label{eq:paired-J}
\end{align}
The degree bound controls martingale increments and variances,
while the pair bound limits the codegree drift.

\smallskip\noindent\emph{Fractional weights.}
We first build a sparse support $J$ and a fractional vector $X$.
The separate treatment of core and outside edges preserves
their prescribed total weights after normalization.
Put $L=(\log n)^{20}$ and $p=L/s$. Call a color large if its
total size is at least $s/(\log n)^2$. In each large color,
sample its core and outside portions separately. From a nonempty
portion of size $a$, choose uniformly $\lceil pa\rceil$ edges
and give each chosen edge weight $a/(s\lceil pa\rceil)\leq1/L$.
For the small colors, pool the core edges and outside edges
separately. For either pool of size $q$, keep it in full with
weights $1/s$ if $q\leq s/(\log n)^8$. Otherwise sample its edges
independently with probability $p$ and, if $Z>0$ edges were sampled,
give each weight $q/(sZ)$. If $Z=0$, give that pool weight zero.
Choices for distinct portions and pools are independent.
Denote the resulting vector by $X$ and its support by $J$.

For a sampled pool, $pq>(\log n)^{12}$. With probability
$1-n^{-100}$, all sampled pools satisfy
$Z=(1\pm(\log n)^{-2})pq$, and every small color contributes at
most $L/10$ sampled edges to either pool. The latter assertion
follows because its expectation is at most $L/(\log n)^2$;
there are $O(n^2)$ colors.
On this event, sampled-pool weights are
$(1+O((\log n)^{-2}))/L$, and each retained pool has total
weight at most $(\log n)^{-8}$. Consequently every small color
has weight less than one.
Each large color has its original total size divided by $s$
as its weight. Each portion or pool retains its original size
divided by $s$ as its total weight, so the core weight is exactly
$n-1$ and the outside weight is exactly one.

\smallskip\noindent\emph{Graphic constraints.}
We verify the graphic inequalities inside the core, using
internal edges for sets of size at most $n/2$ and incident
edges of their complements for larger sets.
For concentration, use preliminary weights $Y$: on a large
color portion of size $a$, every selected edge has weight
$a/(s\lceil pa\rceil)$; on a sampled small-color pool every
selected edge has weight $1/(sp)=1/L$; and on a retained pool
the weight is $1/s$. Thus $Y$ uses fixed weights before the
sampled pools are normalized, and for every edge set
$F$, $\E Y(F)=|F|/s$.
Uniform sampling without replacement and Bernoulli sampling
both satisfy the weighted Chernoff bounds used below, and the
largest summand is at most $1/L$.
For $3\leq|S|=t\leq n/2$, \eqref{eq:paired-state} gives
expected weight at most $0.6t$. Chernoff's upper tail gives
$Y(E(H[S]))\leq0.62t$ with failure probability
$\exp(-\Omega(Lt))$.

For $1\leq|U|=t\leq n/2$, let $I(U)$ be the core edges
incident with $U$. By \eqref{eq:paired-state},
\[
|I(U)|=\sum_{v\in U}d_H(v)-e(H[U])\geq1.3st.
\]
Thus $Y(I(U))\geq1.2t$ with the same failure bound.
A union bound over at most $n^t$ sets of each size proves
these inequalities simultaneously, with failure probability
at most $n^{-100}$ for large $n$.
Normalization changes coordinates by factors
$1\pm O((\log n)^{-2})$, so
$X(E(H[S]))\leq|S|-1$ for $3\leq|S|\leq n/2$, and
$X(I(U))\geq|U|$. Sets of size at most two are handled by
simplicity and the maximum weight.
For a proper set $S$ with $|S|>n/2$, put $U=C\setminus S$.
Then $X(E(H[S]))=n-1-X(I(U))\leq|S|-1$, which proves the
remaining graphic inequalities.

\smallskip\noindent\emph{Support and rounding.}
We next bound support degrees so that every selected tree
satisfies \eqref{eq:paired-D}.
There are at most $2(\log n)^2$ large colors whose core portion
has size less than $s/(2(\log n)^2)$, since each has at least
$s/(2(\log n)^2)$ outside edges. Call these portions exceptional.
At a core vertex they contribute at most $2(\log n)^2$ edges,
by properness. Every other randomly sampled core edge is chosen
with probability at most $2p$. The indicators at a vertex are
independent between colors, so their number is at most $20L$
at every vertex, with failure probability at most $n^{-100}$.
The deterministically kept small core pool has at most
$s/(\log n)^8$ edges. These bounds imply
$\Delta(J[C])\leq D$.

Let $\mathcal E$ be the event that all preceding sampling,
graphic and support estimates hold. The stated bounds give
$\Prb(\mathcal E)=1-O(n^{-100})$.
Condition on $\mathcal E$ and on the resulting vector $X$.
By the direct-sum matroid-intersection representation from
Section~\ref{sec:preliminaries}, choose a random rainbow forest
$P=T\cup\{e\}$, where $T$ is a spanning tree of $H$ and $e$
is an outside edge. For every current edge $f$,
\[
\Prb(f\in P\mid X,\mathcal E)=X_f.
\]
Thus $\Prb(f\in P\mid\mathcal E)=\E[X_f\mid\mathcal E]$.
Since $T\subseteq J[C]$, the degree bound follows.

Before conditioning, every edge has expected weight $1/s$,
except that exchangeability gives expected weight
$\Prb(Z>0)/s$ in a sampled pool. Here
$\Prb(Z>0)=1-\exp(-\Omega((\log n)^{12}))$.
All coordinates are at most $n$, even on failure events, so
conditioning changes an expected coordinate by $O(n^{-99})$.
This gives the asserted marginals.
Color classes of size $s$ are large and have weight one,
so they are used in every selected forest.

To verify \eqref{eq:paired-J}, consider two core edges of
different colors outside the exceptional portions and the
deterministically retained pool. Their support indicators are
independent before conditioning, each with probability at most
$2p$. Since $T\subseteq J[C]$, for sufficiently large $n$,
\[
\begin{aligned}
\Prb(uw,vw\in T\mid\mathcal E)
&\leq\Prb(uw,vw\in J[C]\mid\mathcal E)\\
&\leq\frac{\Prb(uw,vw\in J[C])}{\Prb(\mathcal E)}
\leq8p^2.
\end{aligned}
\]
For fixed $u,v$, at most $4(\log n)^2$ pairs $uw,vw$ involve
an exceptional portion, and at most $2s/(\log n)^8$ involve
the kept pool. For these use the marginal bound $1/s+n^{-50}$.
The edges $uw,vw$ have different colors by properness.
Summing these marginal bounds and the $8p^2$ bound over at most
$n$ remaining pairs proves \eqref{eq:paired-J} and completes
the sampling step.

\smallskip\noindent\emph{Iteration.}
We now repeatedly sample and remove the forests.
Put $r=\lceil(1-\varepsilon)m\rceil$ and
$\beta=\alpha\varepsilon/2$.
For large $n$, the remaining value of $s$ is at least $\beta n$
throughout the first $r$ steps.
At time $t$, let $G_t$ be the remaining graph, let
$H_t=G_t[C]$, and set $s=m-t$.
Stop if \eqref{eq:paired-state} fails for $H_t$ or $r$ forests
have been chosen. All normalized quantities below are frozen
when the process stops.
At a successful step, write $P_t=T_t\cup\{e_t\}$ for the
selected forest and remove it. The two edge-count identities
and the color bound hold with $s$ replaced by $s-1$: a class
of size $s$ loses one edge, while every smaller class already
has size at most $s-1$.

\smallskip\noindent\emph{Degree and codegree tracking.}
Write $d_t(v)$ and $c_t(u,v)$ for degrees and codegrees in
$H_t$. Let $\mathcal F_t$ be the history after $t$ selections.
All one-step expectations below are conditional on
$\mathcal F_t$, before stopping.
For degrees, the edge marginals give
\[
\E d_{T_t}(v)=d_t(v)/s+O(n^{-49}).
\]
Thus $d_t(v)/s$ has drift $O(n^{-50})$, increments $O(D/n)$,
and conditional variance $O(D/n^2)$. The variance bound uses
$d_{T_t}(v)^2\leq Dd_{T_t}(v)$ and expected degree $O(1)$.

For codegrees, inclusion-exclusion gives
\[
\E c_{t+1}(u,v)=(1-2/s)c_t(u,v)+J_t+O(n^{-49}),
\qquad 0\leq J_t\leq\theta_n.
\]
Here $J_t=\sum_w\Prb(uw,vw\in T_t\mid\mathcal F_t)$ is the
contribution from pairs of edges removed together.
The variable $nc_t(u,v)/(s(s-1))$ has drift
$nJ_t/((s-1)(s-2))+O(n^{-50})=O(\theta_n/n)+O(n^{-50})$,
since $(s-1)(s-2)/(s(s-1))=1-2/s$.
Its increments and conditional variance have the same bounds
as for degrees: at most $2D$ common neighbors are lost, and
the expected loss is $O(1)$.

Subtracting the predictable drifts gives stopped martingales,
each with total variance
$O(D/n)=O((\log n)^{-8})$. Equation~\eqref{eq:freedman}, with
deviation $(\log n)^{-2}$, gives failure probability
$2\exp(-\Omega((\log n)^4))$ for each variable, uniformly in time.
The accumulated codegree drift is $O(\theta_n)+O(n^{-49})$.
Since $\theta_n=o((\log n)^{-2})$, a union bound over $O(n^2)$
variables proves
\[
d_t(v)/s=d_0(v)/m+o(1),\qquad
\frac{nc_t(u,v)}{s(s-1)}=\frac{nc_0(u,v)}{m(m-1)}+o(1).
\]
Consequently degrees and codegrees stay within
$(C_\alpha\delta+o(1))n$ of $2s$ and $4s^2/n$ respectively.

\smallskip\noindent\emph{Completion of the iteration.}
It remains to recover the sampler's hypotheses from these
degree and codegree estimates.
Let $H=H_t$, let $A$ be its adjacency matrix, and put $a=2s/n$.
If degree and codegree errors are at most $\eta n$, the
off-diagonal entries of $(A-aJ_n)^2$ have absolute value at
most $3\eta n$, and the diagonal entries are $O(n)$. Therefore
\[
\|A-aJ_n\|^2\leq3\eta n^2+O(n).
\]
Choose $\eta$ small enough in terms of $\beta$ that
$\|A-aJ_n\|\leq0.1s$, and then choose $\delta$ sufficiently small.
For $|S|\leq n/2$ this gives
$e(H[S])\leq s|S|^2/n+0.05s|S|\leq0.55s|S|$;
the degree bounds in \eqref{eq:paired-state} hold with strict margins.
All martingale estimates include the first possible failing
time, so these strict inequalities preclude such a failure.
The process therefore reaches $r\geq(1-\varepsilon)m$.
The selected forests are pairwise edge-disjoint because each
is removed before the next step, proving the lemma.
\end{proof}

The next lemma extracts a graph satisfying Lemma~\ref{lem:paired}
from the critical configuration, while retaining a core that
is close to complete.

\begin{lemma}\label{lem:critical}
For every $\varepsilon>0$, there is $\eta>0$ such that the
following holds for all sufficiently large integers $m$.
Let $G$ be a graph with $e(G)=2m^2$ and a proper edge-coloring
$c$ satisfying $\mu(G,c)\leq m$. Suppose some set
$C\subseteq V(G)$ of $2m$ vertices satisfies
$|E(G)\setminus E(G[C])|\leq(1+\eta)m$. Then $G$ contains at
least $(1-\varepsilon)m$ edge-disjoint rainbow $2m$-edge forests.
\end{lemma}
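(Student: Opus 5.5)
We reduce to Lemma~\ref{lem:paired} with $n=2m$ (so $\alpha$ may be taken close to $1/2$, say $\alpha=0.49$, and $m\geq\alpha n$). A rainbow forest made of a spanning tree of $G[C]$ together with one outside edge has $(2m-1)+1=2m$ edges, exactly as required. The task is therefore to extract from $G$ a subgraph $G'$ satisfying the hypotheses of Lemma~\ref{lem:paired}, possibly with $m$ replaced by a slightly smaller integer $m'=(1-O(\sqrt\eta))m$. Any rainbow forest in $G'$ is also one in $G$, and $(1-\varepsilon/2)m'\geq(1-\varepsilon)m$ once $\eta$ is small.

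\emph{Structure of the core.} Put $H=G[C]$ and $F=E(G)\setminus E(H)$. Then $e(H)\geq2m^2-(1+\eta)m$, while $\binom{2m}{2}=2m^2-m$. Since also $|F|\geq m$ (because $e(H)\leq\binom{2m}{2}$), we get $m\leq|F|\leq(1+\eta)m$, and $H$ misses at most $\eta m$ edges of $K_{2m}$. Hence all but $O(\sqrt\eta\, m)$ vertices of $C$ have degree at least $2m-1-\sqrt\eta m$, and all codegrees of such vertices are $2m-O(\sqrt\eta m)$. Note that the targets in Lemma~\ref{lem:paired} are $d\approx 2m'$ and $\cdg\approx4m'^2/n\approx 2m'$. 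With $m'\approx m$ these match the nearly complete $H$ up to $O(\sqrt\eta\, n)$, except at the few low-degree vertices.

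\emph{Trimming to exact counts.} We need $e(H')=(n-1)m'$, exactly $m'$ outside edges, and $\mu\leq m'$, with degree and codegree errors at most $\delta n$, where $\delta=\delta(\alpha,\varepsilon/2)$ is given by Lemma~\ref{lem:paired}. We take $m'=m-\lceil\eta' m\rceil$ with $\eta'\gg\sqrt\eta$, and construct $G'$ as follows.
\begin{itemize}
\item[(i)] Choose a set of $m'$ outside edges meeting every color of size above $m'$. This is possible since a color of size $>m'$ has more than $m'$ edges in total, and there are at most $2m(1+\eta)/(1-\eta')\cdot$small many such colors relative to budget. More carefully, the number of colors of size $>m'$ is at most $2m^2/m'<2m+5\eta' m$. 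Since we only need to reduce each such color by at most $m-m'$, we instead delete edges of big colors from the core.
\item[(ii)] Reduce every color to size at most $m'$ by deleting $\lceil \eta'm\rceil$ edges from each color of size $>m'$. Choose these deletions randomly inside $H$, so that in total about $2m\cdot\eta'm$ core edges are deleted nearly uniformly.
\item[(iii)] Delete further uniformly random core edges until exactly $(n-1)m'$ remain. Roughly $2m\cdot\eta' m$ deletions are needed, about what step (ii) already deletes, with the excess absorbed by taking extra random deletions from colors of large size.
\item[(iv)] Keep exactly $m'$ outside edges.
\end{itemize}
A random deletion of an $\approx\eta'$ fraction of core edges, arranged to be spread out (for instance, a random subset of each large color class), reduces degrees to $(1-\eta')2m\pm o(m)\approx2m'$ and codegrees to $\approx(1-2\eta')2m\approx 4m'^2/n$. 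Concentration follows by Chernoff bounds, since each color is a matching and the selections are independent across colors.

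\emph{Main obstacle.} The hard part is the low-degree vertices of $C$, at most $O(\sqrt\eta m)$ of them. Their degrees can be far from $2m'$, which violates the pointwise hypothesis of Lemma~\ref{lem:paired}, and they cannot simply be removed from $C$, because the tree must span $C$ and $|C|=2m$ is forced by the forest size. Simultaneously making the color bound $\mu\leq m'$ and the exact edge counts hold is fiddly but routine.

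The fix I would try first is a degree-equalizing deletion: for each vertex $v$, delete about $d_H(v)-2m'$ of its core edges, so that low-degree vertices lose almost nothing and high-degree ones lose $\approx\eta' m$. One way to realize this is a random subgraph with edge-retention probabilities $p_{uv}=\rho_u\rho_v$ tuned so that expected degrees equal $2m'$, which is possible since $\eta'\gg\sqrt\eta$. For codegrees, a vertex $v$ missing up to $\sqrt\eta m$ neighbours still has codegree within $O(\sqrt\eta m+\eta' m)\leq\delta n$ of target, provided $\eta'\leq\delta/10$. So the choice of constants is $\sqrt\eta\ll\eta'\ll\delta$. The low-degree vertices need missing degree at most $\eta' m$; vertices missing more than that are at most $2\eta m/(\eta' m)\cdot m$ in number, and they must be handled by first exchanging core edges with them. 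This last point is where I expect real difficulty, and I would attempt to bound it using $|F|\leq(1+\eta)m$ together with $\binom{2m}{2}-e(H)\leq\eta m$.
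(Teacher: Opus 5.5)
Your reduction to Lemma~\ref{lem:paired} with $n=2m$ is also the paper's plan: pass to a subgraph with $(2m-1)m'$ core edges, $m'$ outside edges and color bound $m'$. However, the obstacle you single out comes from a miscount, and the step you call routine is where the real work lies.

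Put $W=\binom{2m}{2}-e(H)\le\eta m$. Every vertex of $C$ misses at most $W$ core edges, so $d_H(v)\ge 2m-1-\eta m$ for \emph{all} $v\in C$. Your bound $2\eta m/(\eta' m)\cdot m$ on the number of bad vertices carries a spurious factor $m$; the correct bound is $2\eta/\eta'<1$, so there are none. Hence no degree-equalizing scheme is needed, and the scheme $p_{uv}=\rho_u\rho_v$ would in any case ignore the color cap and the exact counts. Suppose $m-m'$ is small and each vertex loses at most a small constant times $m$ core edges. Then $H'$ is $K_{2m}$ minus few edges at every vertex, and its degrees and codegrees automatically lie within $\delta n$ of $2m'$ and $4m'^2/n$.

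What is missing is a correct trimming, and your steps (ii)--(iii) do not give one. A fixed quota of $\lceil\eta' m\rceil$ core deletions per color of size $>m'$ is impossible for a color lying mostly outside $C$, since a class may consist of $m$ outside edges. When more than $2m-1$ such classes lie in the core, the quota also exceeds the core budget $(2m-1)(m-m')-W$. Step (iii) cannot repair such an overshoot, because it only deletes further edges.

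The real issue is how to split each color's deletions between core and outside edges. Three things must hold at once: both exact totals, the color cap, and no core edge deleted with probability much larger than $(m-m')/m$. Write $A_\gamma,B_\gamma$ for the core and outside sizes of color $\gamma$ and put $\rho=m'/m$. The paper chooses reals $x_\gamma,y_\gamma$ with
\[
\rho A_\gamma\le x_\gamma\le\min\{A_\gamma,m'\},\quad 0\le y_\gamma\le\min\{B_\gamma,m'-x_\gamma\},\quad \sum x_\gamma=(2m-1)m',\quad \sum y_\gamma=m'.
\]
Feasibility needs $m-m'\ge W$; the paper takes $m'=m-2W$, and your choice of $m'$ would also do. It also uses the bound $\min\{B_\gamma,m'-x_\gamma\}\ge\rho B_\gamma-(x_\gamma-\rho A_\gamma)$, whose right-hand sides sum to exactly $m'$.

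The paper then applies dependent rounding on the bipartite graph between the colors and the two edge types. This preserves both totals and the caps $X_\gamma+Y_\gamma\le m'$, and gives negative correlation. It then keeps uniformly random $X_\gamma$ core edges and $Y_\gamma$ outside edges of each color. Each core edge is deleted with probability at most $1-\rho$, and negative correlation yields the per-vertex Chernoff bound. That is exactly the spreading your argument needs and does not supply.
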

\begin{proof}
We may assume $0<\varepsilon<1$.
We thin the graph to obtain exact core and outside edge counts
and the matching color bound required by Lemma~\ref{lem:paired}.
Write $W=\binom{2m}{2}-e(G[C])$. Then $0\leq W\leq\eta m$
and there are $m+W$ outside edges. Put $m'=m-2W$ and
$\rho=m'/m$. For each color let $A_\gamma,B_\gamma$ be its core
and outside sizes; thus $A_\gamma+B_\gamma\leq m$.
We choose a subgraph with $(2m-1)m'$ core edges, $m'$ outside
edges, and global color bound $m'$.

\smallskip\noindent\emph{Choosing the target sizes.}
The targets must respect the edge totals and color capacities,
while retaining each core edge with probability at least $\rho$.
First choose real numbers $x_\gamma$ such that
\[
\rho A_\gamma\leq x_\gamma\leq\min\{A_\gamma,m'\},
\qquad \sum_\gamma x_\gamma=(2m-1)m'.
\]
Such numbers exist. The sum of the lower bounds is
$(2m-1)m'-\rho W$. The sum of the upper bounds is at least
$(2m-1)m'$: if at least $2m-1$ classes have $A_\gamma\geq m'$
this is immediate; otherwise clipping at $m'$ loses at most
$(2m-2)(m-m')$, leaving total capacity at least
$(2m-1)m'+W$.
Put $z_\gamma=x_\gamma-\rho A_\gamma$, so that
$\sum z_\gamma=\rho W$. Since $A_\gamma+B_\gamma\leq m$,
\[
\begin{aligned}
m'-x_\gamma
&=\rho m-\rho A_\gamma-z_\gamma
\geq\rho B_\gamma-z_\gamma,\\
B_\gamma&\geq\rho B_\gamma-z_\gamma.
\end{aligned}
\]
Thus each outside capacity $\min\{B_\gamma,m'-x_\gamma\}$
is at least $\rho B_\gamma-z_\gamma$. Consequently,
\[
\sum_\gamma\min\{B_\gamma,m'-x_\gamma\}
\geq\sum_\gamma(\rho B_\gamma-z_\gamma)=m'.
\]
Choose nonnegative numbers $y_\gamma$ within these capacities
with sum $m'$.

\smallskip\noindent\emph{Dependent rounding.}
We round the target sizes while preserving both edge totals
and the color capacities.
Apply dependent rounding~\cite[Theorem~2.3]{GKPS} to the
fractional parts of these numbers on the bipartite graph with
colors on one side and two vertices on the other, representing
core and outside edges. It gives integers $X_\gamma,Y_\gamma$,
each a floor or ceiling of the corresponding amount, with the
same expectations.
Both type totals are preserved because they are integers, and
$X_\gamma+Y_\gamma\leq m'$ for each color.
Rounding variables on edges incident with the core-type vertex
satisfy negative correlation, also for their complements.
Choose uniformly $X_\gamma$ of the core edges and $Y_\gamma$
of the outside edges, independently between colors conditional
on the rounded values.

\smallskip\noindent\emph{Controlling the deletions.}
At a core vertex the incident edges have distinct colors.
For any set of these edges the probability that they are all
deleted is at most the product of their individual deletion
probabilities.
Indeed, conditional on the rounded values that probability is
a product of terms $1-X_\gamma/A_\gamma$; each nonconstant term
is a nonnegative affine function of the complement of its
rounding variable. Expanding the product and using the stated
negative correlation proves the assertion.
Each deletion probability is at most $1-\rho=2W/m$, so the
expected number of deletions at a vertex is at most $4W$.
The Chernoff upper tail, obtained by expanding its moment
generating function, and a union bound show that every vertex
loses at most $8W+m^{2/3}$ edges with probability tending to one.
Fix such an outcome.

Each core vertex already misses at most $W$ edges from the
complete graph. The selected core is therefore obtained from
$K_n$, where $n=2m$, by deleting at most $9W+m^{2/3}$ edges
at each vertex. Its degrees and codegrees consequently differ
from $2m'$ and $4(m')^2/n$ by at most $30W+3m^{2/3}+3$.
Choose $\eta$ small enough for Lemma~\ref{lem:paired}, with
$\alpha=1/3$ and error $\varepsilon/4$, and also so that
$\eta<\min\{1/10,\varepsilon/8\}$.
In particular $m'\geq n/3$. For large $m$, the selected graph
satisfies that lemma and yields at least
$(1-\varepsilon/4)m'\geq(1-\varepsilon)m$ required forests.
\end{proof}

\section{Sampling forests from the remaining graphs}\label{sec:sampling}

Almost all edges of the graphs considered here lie in a set
$R$ of size close to $k$. The additional slack condition below
allows us to iterate a fractional sampler while controlling
both the graph on $R$ and the contribution of outside vertices.

\begin{lemma}\label{lem:remaining}
Fix $\varepsilon,\eta>0$ and the constant $\kappa$ of
Lemma~\ref{lem:gap}. For all sufficiently large integers $m$ and
all integers $k$ with $1.9m\leq k\leq2m$, the following holds.
Let $G$ be a graph without isolated vertices, with $e(G)=km$ and
a proper edge-coloring $c$ satisfying $\mu(G,c)\leq m$.
Put $h=m^{1-\kappa}$ and suppose that $R\subseteq V(G)$ satisfies
\begin{equation}\label{eq:remaining-core}
\begin{gathered}
k\leq|R|\leq k+500h\leq3m,\qquad
|E(G)\setminus E(G[R])|\leq50hm,\\
d_G(v,R)<m/10\quad(v\in V(G)\setminus R).
\end{gathered}
\end{equation}
Suppose in addition that
\begin{equation}\label{eq:uniform-slack}
|E(G)\setminus E(G[U])|\geq(1+\eta)m(k-|U|+1)
\quad( U\subseteq R,\ 1.9m\leq|U|\leq k ).
\end{equation}
Then $G$ contains at least $(1-\varepsilon)m$ edge-disjoint
rainbow $k$-edge forests.
\end{lemma}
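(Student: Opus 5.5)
We will imitate the proof of Lemma~\ref{lem:paired}: build a random rainbow $k$-edge forest with almost uniform edge marginals $1/s$, where $s=m-t$ is the current step, remove it, and iterate $r=\lceil(1-\varepsilon)m\rceil$ times. Along the way we track, with the scalar and matrix forms of~\eqref{eq:freedman}, the normalized quantities that certify that the sampler can still be built. In the current graph $G_t$ we maintain the state $e(G_t)=ks$ and $\mu\leq s$. For every vertex, the degree into $R$ normalized by $s$ should change by only $o(1)$. For the graph on $R$, the matrix $A_{G_t[R]}/s$ should stay close to $A_{G[R]}/m$ in operator norm, and for this we use the matrix martingale inequality. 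Finally, the slack condition~\eqref{eq:uniform-slack} should survive in the weakened form $|E(G_t)\setminus E(G_t[U])|\geq(1+\eta/2)s(k-|U|+1)$ for $U\subseteq R$ with $1.9m\leq|U|\leq k$. Since the expected loss of $|F_U|$ in one step is $|F_U|/s$, this normalized quantity has drift only $O(n^{-50})$. Its increments are bounded by $\max_v d_{P}(v)$ summed over the vertices involved, which the degree bound in the sampler controls. Lemma~\ref{lem:count}, applied with $Q=O(\log$-level$)$ thresholds, bounds by $N^{50Q}$ the number of sets $U$ that can ever be close to violating slack. This allows a union bound over sets $U$ weighted by the Freedman tail, which must beat $N^{50Q}$; the tail is exponential in $Q$ times a polylog, which is enough.

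The one-step sampler follows the recipe of Lemma~\ref{lem:paired}. Sparsify every color (large colors by uniform subsampling of $\lceil pa\rceil$ edges, small colors pooled and Bernoulli-sampled) to get a vector $X$ with $X(E(G_t))=k$, $X(E_\gamma)\leq 1$, support of maximum degree at most $n/(\log n)^8$, and pair-inclusion bounds like~\eqref{eq:paired-J}. Then invoke the matroid-intersection representation~\eqref{eq:matroid} to round $X$ to a random rainbow $k$-edge forest with marginals $X_e$. Colors of size exactly $s$ get weight one, so they are used every time, and this keeps $\mu\leq s-1$ after the step.

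The only genuinely new part is the graphic constraint $X(E(G[S]))\leq|S|-1$. We split by the size and position of $S$. If $|S\cap R|\leq 1.9m$, we use the density bound inherited from the spectral estimate on $G_t[R]$. This gives $e(G_t[S\cap R])\leq (1-c)s|S\cap R|$, as in the bound $e(H[S])\leq0.55m|S|$. Vertices of $S$ outside $R$ contribute fewer than $m/10$ edges each into $R$, and the outside part is sparse by the $50hm$ bound. Chernoff bounds with a union bound over $n^{|S|}$ sets then give weight at most $|S|-1$ with room to spare. If $|S\cap R|$ is large, let $U=S\cap R$. The expected weight outside $G[U]$ is $|F_U|/s\geq(1+\eta/2)(k-|U|+1)$ by the maintained slack, so the weight of $E(G[S])$ is at most $k-(1+\eta/4)(k-|U|+1)+(\text{weight of edges from }S\setminus R)$. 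Removing vertices of $S\setminus R$ costs little for the same reason as before. The case $|U|>k$ cannot occur for a violated constraint, because $X(E(G[S]))\leq k\leq|S|-1$. For this case the union bound runs only over the sets counted by Lemma~\ref{lem:count}, since any $U$ with $|F_U|>Qs$ for large $Q$ is handled trivially.

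We expect the main obstacle to be making the large-set graphic inequalities and the slack-tracking work together. The slack $\eta$ is a fixed constant, but the weights must be concentrated to relative error $o(\eta)$ simultaneously for all relevant $U$, whose number is only controlled via Lemma~\ref{lem:count}. Near $|U|=k$ the required margin $\eta m(k-|U|+1)/s$ can be as small as $\eta$. There we would rely on exact deterministic accounting, using the exact totals per color portion as in the paired sampler together with the fact that few outside edges carry weight. This replaces probabilistic concentration, so that $X(F_U)$ is controlled up to an additive $O((\log n)^{-2})$. If that fails, we would first treat $|U|\geq k-O(1)$ deterministically, using $q_U-j_U$ from~\eqref{eq:rank-slack}.
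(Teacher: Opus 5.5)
\paragraph{The gap: maintaining the slack.} Your architecture matches the paper's: sparsify, verify \eqref{eq:matroid}, round by matroid intersection, remove the forest, and track normalized quantities with scalar and matrix Freedman bounds. Your split of the graphic constraints by $|S\cap R|$ also matches. Tracking each vertex's degree into $R$ is a workable substitute for the paper's attachment sets $B_T$. The gap is in maintaining the slack \eqref{eq:uniform-slack}.

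You bound the increment $|P\cap F_U|$ by the sampler's maximum degree summed over the vertices outside $U$. But $R\setminus U$ can contain $\Theta(m^{1-\kappa})$ vertices, since $|R|$ may be $k+500h$, and $F_U$ also contains every edge outside $G[R]$. With a degree bound of order $n/(\log n)^8$ this gives nothing better than $|P\cap F_U|\leq k$, i.e.\ normalized increments of order one. Freedman then cannot beat the $N^{50q_U}$ sets of Lemma~\ref{lem:count}. What you need is a support bound for each tracked set. The paper imposes $|J\cap F_U|\leq2m^a$, proved by Chernoff (the mean is $O(pmq_U)$) together with a union bound through Lemma~\ref{lem:count}. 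This gives tails $\exp(-\Omega(q_Um^{1-a}))$.

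Moreover, no such bound is possible once $q_U$ is of order $m/\log m$ or larger, because $\E|P\cap F_U|\approx q_U$. For these sets your claimed tail $\exp(-Q\cdot\mathrm{polylog})$ is false, and no scalar martingale with a union bound over $N^{50q_U}$ sets can work. You also cannot skip them as far from violation: \eqref{eq:rank-slack} does not transfer to $G_t$, since it needs $|U|\leq2s$.

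\paragraph{How the paper handles large $q_U$.} The paper uses scalar martingales only for $q_U\leq Q_*=m^{1-\kappa/4}$. For $q_U>Q_*$ it uses the matrix estimate \eqref{eq:state-matrix}, whose error $m^{-1/5}$ gives $|q_t(U)-q_U|\leq\tfrac12m^{-1/5}|U|\leq m^{4/5}=o(Q_*)$. The exponents are chosen so that the two regimes overlap. This needs a polynomially small matrix error, which comes from two facts:
\begin{itemize}
\item $\|\Pi_t\|\leq2\sqrt k$ for any forest;
\item $\|\E\Pi_t^2\|\leq Cm^{1/2}$, obtained from $\Prb(e,f\in P)\leq8p^2$ for distinctly colored sampled edges and from $\|A_K\|\leq\sqrt{2|K|}$ for the kept pool.
\end{itemize}
A variance bound read off from a codegree estimate like \eqref{eq:paired-J} gives only $\|\E\Pi_t^2\|=O(n/(\log n)^8)$. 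That yields a polylogarithmic matrix error, which leaves a range of $q_U$ covered by neither argument.

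\paragraph{The obstacle near $|U|=k$.} The difficulty you anticipate there does not arise. All weights are $O(1/L)$ with $L\gg\log N$. So the Chernoff lower tail for $Y(F_U\cap E(G_t))$ at relative deviation $\eta/10$ fails with probability $\exp(-\Omega(Lq_U))$. This beats the $N^{O(q_U)}$ sets even when $q_U$ is close to $1+\eta$. No deterministic accounting is needed, and per-portion color totals would not determine $X(F_U)$ in any case.
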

\begin{proof}
We may assume $0<\varepsilon,\eta<1$.
At each step we construct fractional weights of total $k$,
verify the color and graphic constraints, and round them to
a rainbow $k$-edge forest with nearly uniform marginals.
We then remove the forest and use scalar and matrix martingale
concentration to preserve three structural estimates.

\smallskip\noindent\emph{Parameters and tracked sets.}
Put
\[
\begin{gathered}
\beta=\varepsilon/2,\quad h=m^{1-\kappa},\quad
a=1-\kappa/16,\quad b=1-\kappa/32,\\
Q_*=m^{1-\kappa/4},\quad p=m^{-1+\kappa/8},\quad L=m^{\kappa/8},
\end{gathered}
\]
and set $T_* =\lceil1000h/\beta\rceil$ and $\lambda=\eta/100$.
Here $\beta$ bounds the remaining scale relative to $m$ from below.
The exponents $a,b$ specify the small-pool retention threshold
and the large-color threshold, while $p$ controls the support
density and $L=mp$ is the concentration scale.
We track complements with $q_U\leq Q_*$ to relative accuracy
$\lambda$, and outside sets of size at most $T_*$.

Since $G$ has no isolated vertices, $N=|V(G)|\leq4m^2$.
Recall from Section~\ref{sec:core} that
$F_U=E(G)\setminus E(G[U])$, $q_U=|F_U|/m$, and
$j_U=k-|U|+1$. The graphic constraint on $U$ requires weight
at least $j_U$ on its complementary edge set $F_U$.
To control sets extending beyond $R$, for
$T\subseteq V(G)\setminus R$ define the edges added when $T$
is adjoined to $R$ by
\[
B_T=E(G[R\cup T])\setminus E(G[R]).
\]
For $|T|\leq T_*$, \eqref{eq:remaining-core} gives
\begin{equation}\label{eq:attachment-initial}
|B_T|\leq(m/10)|T|+\binom{|T|}{2}\leq0.11m|T|.
\end{equation}
All sets $F_U$, $B_T$ and all numbers $q_U$ below refer to the
initial graph.

\smallskip\noindent\emph{The state.}
The matrix estimate gives absolute control on every induced
subgraph of $R$. The complement estimates retain the relative
slack needed for large core sets with small $q_U$, and the
attachment estimates limit the weight contributed by outside
vertices.
At time $t$, let $G_t$ be the remaining graph and $s=m-t$.
We maintain $e(G_t)=ks$, $\mu(G_t,c)\leq s$, and the estimates
\begin{align}
\|A_{G_t[R]}/s-A_{G[R]}/m\|&\leq m^{-1/5},
\label{eq:state-matrix}\\
\left||F_U\cap E(G_t)|/s-q_U\right|&\leq\lambda q_U,
\label{eq:state-cuts}\\
|B_T\cap E(G_t)|/s&\leq0.3|T|.
\label{eq:state-attachments}
\end{align}
Here \eqref{eq:state-cuts} is required for every $U\subseteq R$
with $1.9m\leq|U|\leq k$ and $q_U\leq Q_*$, and
\eqref{eq:state-attachments} for every nonempty
$T\subseteq V(G)\setminus R$ with $|T|\leq T_*$.
These estimates hold at time zero. We give a sampling step whenever
they hold and $s\geq\beta m$.

\smallskip\noindent\emph{The sampling step.}
We first construct a sparse fractional vector satisfying the
total-weight and color constraints. The set $K$ will record
the edges retained without random sampling.
Call a current color large if its size $a_\gamma$ is at least
$m^b$. Choose uniformly $\ell_\gamma=\lceil pa_\gamma\rceil$
edges of each large color, independently between colors, and give
them weight $a_\gamma/(s\ell_\gamma)\leq1/(sp)$.
Let $Q_{\mathrm{sm}}$ be the union of the small classes and
$q=|Q_{\mathrm{sm}}|$.
If $q\leq m^a$, retain $Q_{\mathrm{sm}}$ with weights $1/s$, and
put $K=Q_{\mathrm{sm}}$.
Otherwise put $K=\varnothing$, sample the edges of $Q_{\mathrm{sm}}$ independently
with probability $p$, and give the $Z>0$ sampled edges weight
$q/(sZ)$, with all weights in this pool zero if $Z=0$.
Write $J$ for the support and $X$ for these weights.
All selections for different large colors and for $Q_{\mathrm{sm}}$
are independent.

We first check the normalization and the color weights.
If the small pool is sampled, $pq>m^{\kappa/16}$. Chernoff's inequality
gives
\begin{equation}\label{eq:pool}
Z=(1\pm m^{-\kappa/64})pq
\end{equation}
with failure probability $\exp(-\Omega(m^{\kappa/32}))$.
Each small color contributes mean at most $pm^b=m^{3\kappa/32}$
sampled edges. Since $sp\geq\beta L$, with failure probability
$\exp(-\Omega(L))$ every such color contributes at most $sp/4$
sampled edges. Its weight is then less than one. When the pool
is kept in full, small colors have weight at most $m^b/s=o(1)$.
Large colors have weight $a_\gamma/s\leq1$.
On these events, each color has weight at most one.
The total weight is exactly $k$, and the maximum weight is
$O_\beta(1/L)$.
If the small pool is retained in full, no normalization is needed
in the estimates below.

Use preliminary weights $Y$ as follows: a selected edge from a
large color of size $a_\gamma$ has weight
$a_\gamma/(s\lceil p a_\gamma\rceil)$, a sampled edge from the
small pool has weight $1/(sp)$, and an edge in a retained pool
has weight $1/s$. These weights are fixed within each sampling
group, before the sampled pool is normalized. For every edge set $F$,
$\E Y(F)=|F\cap E(G_t)|/s$, and weighted Chernoff bounds have
largest summand $O_\beta(1/L)$. On \eqref{eq:pool}, all coordinates
of $X$ and $Y$ differ by factors $1\pm O(m^{-\kappa/64})$.

\smallskip\noindent\emph{Graphic constraints within $R$.}
We use the matrix estimate for smaller core sets and the
complement counts for sets of size close to $k$.
For
$100\leq|U|=u\leq1.9m$, \eqref{eq:state-matrix} and simplicity
give
\[
e(G_t[U])/s\leq\binom{u}{2}/m+\tfrac12m^{-1/5}u
\leq0.96u.
\]
Chernoff's inequality gives $Y(E(G_t[U]))\leq0.97u$ with
failure probability $\exp(-\Omega(Lu))$. A union bound over at
most $(3m)^u$ sets of each size proves these bounds simultaneously.
On \eqref{eq:pool}, normalization then gives
$X(E(G_t[U]))\leq0.98u\leq u-1$.
For $u<100$, the same graphic inequality follows directly from
simplicity and the maximum edge weight, with the cases $u=1,2$
included.

Now let $1.9m\leq u\leq k$. We verify the graphic constraint
by lower-bounding the weight on $F_U$.
Put $q_t(U)=|F_U\cap E(G_t)|/s$. If $q_U\leq Q_*$,
\eqref{eq:state-cuts} gives $q_t(U)=(1\pm\lambda)q_U$.
If $q_U>Q_*$, use $q_t(U)=k-e(G_t[U])/s$ and
$q_U=k-e(G[U])/m$. Then \eqref{eq:state-matrix} gives
\[
|q_t(U)-q_U|\leq m^{4/5}\leq\lambda q_U
\]
for sufficiently large $m$, since $m^{4/5}=o(Q_*)$.
Hence in both cases, by \eqref{eq:uniform-slack},
\[
q_t(U)\geq(1-\lambda)q_U
\geq(1-\lambda)(1+\eta)j_U
\geq(1+\eta/2)j_U,
\]
where we use $\lambda=\eta/100$.
Chernoff's lower tail gives
\[
Y(F_U\cap E(G_t))\geq(1-\eta/10)q_t(U)
\]
with failure probability $\exp(-\Omega_{\beta,\eta}(Lq_U))$.
For each dyadic range $q_U\in[Q,2Q)$, Lemma~\ref{lem:count},
applied with $2Q$, gives at most $N^{100Q}$ sets.
As $L\gg\log m$, a union bound over these ranges succeeds with
probability $1-m^{-200}$.
The lower bound is at least $(1+\eta/3)j_U$, so on
\eqref{eq:pool} it implies $X(F_U\cap E(G_t))\geq j_U$.
Since $X(E(G_t))=k$, the identity
$X(E(G_t[U]))=k-X(F_U\cap E(G_t))$ proves
$X(E(G_t[U]))\leq|U|-1$ for every nonempty $U\subseteq R$
of size at most $k$.

\smallskip\noindent\emph{Outside attachments.}
For small outside sets we use the tracked attachment counts;
for larger ones we use the total weight outside $G[R]$.
For nonempty $T\subseteq V(G)\setminus R$ with $|T|\leq T_*$,
\eqref{eq:state-attachments} and Chernoff's inequality give
$Y(B_T\cap E(G_t))\leq0.35|T|$ with failure probability
$\exp(-\Omega(L|T|))$. A union bound over at most $N^{|T|}$ sets
of each size proves these bounds simultaneously.
On \eqref{eq:pool}, this yields
\begin{equation}\label{eq:sample-attachment}
X(B_T\cap E(G_t))\leq0.4|T|.
\end{equation}
Also, the initial
number of edges outside $G[R]$ is at most $50hm$, so their current
expected preliminary weight is at most $50h/\beta$.
With failure probability $\exp(-\Omega(Lh))$, their total
weight in $Y$ is at most $75h/\beta$, and on \eqref{eq:pool}
their total weight in $X$ is at most $100h/\beta\leq T_*/2$.

For any nonempty $S$ with $|S|\leq k$, write $U=S\cap R$ and
$T=S\setminus R$. Every edge of $G_t[S]$ outside $G_t[U]$
belongs to $B_T$. If $U\ne\varnothing$, the core contributes
at most $|U|-1$. When $|T|\leq T_*$,
\eqref{eq:sample-attachment} bounds the extra weight by
$0.4|T|\leq|T|$; this also holds for $T=\varnothing$.
When $|T|>T_*$, the total outside bound is at most
$T_*/2<|T|$. In either case,
$X(E(G_t[S]))\leq|S|-1$.
If $U=\varnothing$ and $|T|\geq2$, the same conclusion follows
from $0.4|T|\leq|T|-1$ when $|T|\leq T_*$, or from
$T_*/2\leq|T|-1$ when $|T|>T_*$. Singletons have no edges.
Sets of size larger than $k$ are handled by $X(E(G_t))=k$.
Thus, on \eqref{eq:pool} and the preceding bounds for $Y$, all
inequalities in \eqref{eq:matroid} hold, so $X$ lies in the
rainbow $k$-forest polytope. These events hold simultaneously
with probability $1-O(m^{-200})$.

\smallskip\noindent\emph{Support and rounding.}
To control scalar increments and variances during the iteration,
we bound how many edges a forest can take from each tracked set.
We impose this bound on the support.
For all the sets $F_U$ occurring in \eqref{eq:state-cuts} and all
$B_T$ occurring in \eqref{eq:state-attachments}, require
\begin{equation}\label{eq:support-tracked}
|J\cap F|\leq2m^a.
\end{equation}
Every randomly sampled edge is chosen with probability at most $2p$,
since $pa_\gamma\geq m^{3\kappa/32}$ for a large color.
Each tracked set has at most $mQ_*$ edges for large $m$:
for $F_U$ use $|F_U|=mq_U$ and $q_U\leq Q_*$, while for
$B_T$ use \eqref{eq:attachment-initial} and $T_*=o(Q_*)$.
The expected number of randomly
retained edges in each such set is at most
$2pmQ_*=2m^{1-\kappa/8}=o(m^a)$, and the retained pool contributes
at most $m^a$ further edges.
Chernoff's inequality bounds failure in \eqref{eq:support-tracked}
by $\exp(-\Omega(m^a))$ for each set. There are at most
$\exp(O(Q_*\log m))$ tracked sets, by Lemma~\ref{lem:count}
and $T_*=o(Q_*)$. Since $Q_*\log m=o(m^a)$, all these support
bounds hold simultaneously with probability $1-m^{-200}$.

Let $\mathcal E$ be the event that all preceding sampling,
graphic and support bounds hold. Then
$\Prb(\mathcal E)=1-O(m^{-200})$.
Condition on $\mathcal E$ and on the resulting vector $X$.
By the matroid-intersection representation in
Section~\ref{sec:preliminaries}, choose a random rainbow
$k$-edge forest $P$ such that
\[
\Prb(e\in P\mid X,\mathcal E)=X_e\quad(e\in E(G_t)).
\]
The sampler uses this conditional law, so its edge marginals
are $\E[X_e\mid\mathcal E]$.
The unconditioned expected weight of each edge is $1/s$,
apart from the factor $\Prb(Z>0)$ for a sampled small pool,
where $\Prb(Z>0)=1-\exp(-\Omega(m^{\kappa/16}))$.
All coordinates are at most $k\leq2m$, even on failure events,
so conditioning changes their means by $O(m^{-199})$.
Thus
\begin{equation}\label{eq:forest-marginals}
\Prb(e\in P)=1/s\pm m^{-100}\quad(e\in E(G_t)).
\end{equation}
Every class of size $s$ is large for sufficiently large $m$, has
weight one, and contributes one edge to $P$.
Since $P\subseteq J$, \eqref{eq:support-tracked} holds for $P$.

For the matrix variance estimate we also need pair probabilities.
For two edges of different colors outside $K$, their support
indicators are independent before conditioning, and each edge
belongs to $J$ with probability at most $2p$.
For sufficiently large $m$, conditioning gives
\[
\Prb(e,f\in J\mid\mathcal E)
\leq\frac{\Prb(e,f\in J)}{\Prb(\mathcal E)}
\leq8p^2.
\]
Since $P\subseteq J$, the sampled forest satisfies
\begin{equation}\label{eq:forest-pairs}
\Prb(e,f\in P)\leq8p^2\quad(e,f\notin K, c(e)\ne c(f)).
\end{equation}
For pairs involving $K$, the bound $2/s$ follows from
\eqref{eq:forest-marginals}. Notice that $|K|\leq m^a$.

\smallskip\noindent\emph{Iteration.}
Iterate this step until $r=\lceil(1-\varepsilon)m\rceil$ forests
are selected, or stop at the first violation of
\eqref{eq:state-matrix}--\eqref{eq:state-attachments}.
At a successful step $t$, denote the selected forest by $P_t$
and remove it to obtain $G_{t+1}$.
All normalized quantities are frozen when the process stops.
For large $m$, $m-r\geq\beta m$.
The edge count and color bound are preserved with $s$ replaced
by $s-1$: $P$ has exactly $k$ edges and uses every color class
of current size $s$.

\smallskip\noindent\emph{Scalar tracking.}
Let $\mathcal F_t$ be the history after $t$ selections.
All one-step expectations below are conditional on
$\mathcal F_t$.
For a tracked edge set $F$, put $Z_t(F)=|F\cap E(G_t)|/s$ and
$X_t=|F\cap E(P_t)|$. By \eqref{eq:forest-marginals},
$\E X_t=Z_t(F)+O(m^{-98})$, and
\[
Z_{t+1}(F)-Z_t(F)=\frac{Z_t(F)-X_t}{s-1}.
\]
Its drift is $O(m^{-99})$. For $F=F_U$ in
\eqref{eq:state-cuts}, use $z=q_U$; for $F=B_T$, use $z=|T|$.
Before stopping, $Z_t(F)=O(z)$, $z\leq Q_*$, and
$X_t\leq2m^a$ by \eqref{eq:support-tracked}. Thus the increments
are $O(m^{a-1})$ and their conditional variances are at most
$Cz m^{a-2}$, using $X_t^2\leq2m^aX_t$ and $\E X_t=O(z)$.
The total conditional variance is therefore $O(zm^{a-1})$.
After subtracting the drifts and stopping,
\eqref{eq:freedman} shows that a deviation by any fixed positive
multiple of $z$ has probability at most
\begin{equation}\label{eq:scalar-failure}
2\exp(-\Omega(zm^{1-a})).
\end{equation}
For $F_U$, take deviation $\lambda q_U/2$ and sum this bound in
dyadic ranges using Lemma~\ref{lem:count}. For $B_T$, take
deviation $0.1|T|$ and use at most $N^{|T|}$ choices. Since
$m^{1-a}\gg\log m$, with probability $1-o(1)$ all these bounds
hold uniformly in time. The initial cut discrepancy is zero,
and the initial attachment value is at most $0.11|T|$ by
\eqref{eq:attachment-initial}. Thus these deviations, including
the negligible drift, keep \eqref{eq:state-cuts} and
\eqref{eq:state-attachments} strictly within their permitted ranges.

\smallskip\noindent\emph{Matrix tracking.}
It remains to prove \eqref{eq:state-matrix}.
The nearly uniform marginals make the normalized adjacency
matrix almost a martingale; the forest norm and pair estimates
will control its increments and variance.
Write $A_t=A_{G_t[R]}$ and $\Pi_t=A_{P_t[R]}$. As in the scalar case,
\[
\frac{A_{t+1}}{s-1}-\frac{A_t}{s}
=\frac{A_t/s-\Pi_t}{s-1}.
\]
Equation \eqref{eq:forest-marginals} gives
$\E \Pi_t=A_t/s+E_t$, with $\|E_t\|=O(m^{-99})$.
Hence $A_t/s$ has negligible drift $O(m^{-100})$ per step.
The adjacency matrix of any forest with at most $k$ edges has
norm at most $2\sqrt{k}$: orient each component towards a root
and let $B$ be the directed adjacency matrix. Its row sums are
at most one and its column sums at most $k$, so its norm is
at most $\sqrt{\|B\|_1\|B\|_\infty}\leq\sqrt{k}$.
The undirected adjacency matrix is $B+B^{\mathsf T}$.
Consequently the martingale differences for $A_t/s$ have norm
$O(m^{-1/2})$.

We bound their conditional variance more sharply. Let $A_K$ be
the adjacency matrix of the kept edges inside $R$. The diagonal
entries of $\E \Pi_t^2$ are $O(1)$ by
\eqref{eq:forest-marginals}.
For an off-diagonal entry, the two edges through a common
neighbor have distinct colors by properness. Thus
\eqref{eq:forest-pairs} applies unless one lies in $K$,
in which case use $2/s$. Entrywise,
\[
0\leq\E \Pi_t^2\leq C I+8p^2A_t^2+
\frac{2}{s}(A_KA_t+A_tA_K).
\]
For symmetric entrywise nonnegative matrices, the operator norm
is monotone under entrywise comparison by Perron--Frobenius.
Since $|R|\leq3m$, $\|A_t\|\leq3m$.
By the Frobenius norm,
$\|A_K\|\leq\|A_K\|_F\leq\sqrt{2|K|}\leq\sqrt{2m^a}$.
It follows that
\begin{equation}\label{eq:matrix-variance}
\|\E \Pi_t^2\|\leq C(1+m^{\kappa/4}+m^{a/2})\leq C m^{1/2}.
\end{equation}
Centering can only decrease the second-moment matrix in the
positive semidefinite order, since
\[
\E[(\Pi_t-\E \Pi_t)^2]
=\E \Pi_t^2-(\E \Pi_t)^2\preceq\E \Pi_t^2.
\]
After division by $(s-1)^2$, the
sum of conditional variances over the process has norm
$O(m^{-1/2})$. Apply \eqref{eq:freedman} in dimension at most
$3m$ with deviation $\tfrac12m^{-1/5}$; its failure probability
is at most $6m\exp(-\Omega(m^{1/10}))$.
Including the negligible drift, \eqref{eq:state-matrix} also
stays strictly within its permitted range with probability $1-o(1)$.

\smallskip\noindent\emph{Completion.}
With probability $1-o(1)$, all three estimates remain strictly
within their permitted ranges. The stopped estimates include
the first possible failing time, so they preclude stopping
before $r$ forests are selected.
Since each selected forest is removed, the process produces
$r\geq(1-\varepsilon)m$ pairwise edge-disjoint rainbow
$k$-edge forests, proving the lemma.
\end{proof}

\section{Proof of the main theorem}\label{sec:main-proof}

\begin{proof}[Proof of Theorem~\ref{thm:main}]
We may assume $0<\varepsilon<1$. Choose $\eta>0$ from
Lemma~\ref{lem:critical}, decreasing it to be at most $1/10$.
Fix $\kappa$ from Lemma~\ref{lem:gap} and take $m$ sufficiently large
for all preceding lemmas. Delete isolated vertices.

First suppose that $G$ has an orientation of maximum outdegree
at most $m-m^{1-\kappa}$. Lemma~\ref{lem:gap} then proves the theorem.
It remains to consider graphs with no such orientation.
Lemma~\ref{lem:core} gives $k\geq2m-6m^{1-\kappa}\geq1.9m$
and a set $R$ satisfying~\eqref{eq:core}.

If $k=2m$ and some set $C$ of $2m$ vertices has at most
$(1+\eta)m$ edges outside it, Lemma~\ref{lem:critical} proves
the theorem. We therefore assume that this critical
configuration does not occur.

In the remaining case, \eqref{eq:core} supplies the structural
hypotheses of Lemma~\ref{lem:remaining}; it remains to verify
\eqref{eq:uniform-slack}.
Let $U\subseteq R$ have $1.9m\leq u=|U|\leq k$, and put
$j=k-u+1$. Recall that $q_U=|E(G)\setminus E(G[U])|/m$.
By~\eqref{eq:rank-slack},
\[
q_U-j\geq\frac{(u-1)(2m-u)}{2m}.
\]
Since $u\geq1.9m$, the factor $(u-1)/(2m)$ is at least $0.9$
for large $m$.

For $k\leq2m-1$ we have $2m-u\geq j$, so the right side is
at least $0.9j$. For $k=2m$ and $u\leq k-1$, we have
$2m-u=j-1\geq j/2$, giving at least $0.45j$.
Finally, if $k=u=2m$, then $j=1$ and the exclusion of the
critical configuration gives $q_U>1+\eta$.

In every case, $q_U\geq(1+\eta)j$ since $\eta\leq1/10$.
Thus \eqref{eq:uniform-slack} holds, and Lemma~\ref{lem:remaining}
gives at least $(1-\varepsilon)m$ pairwise edge-disjoint rainbow
forests, each with exactly $k$ edges, as required.
\end{proof}

\section*{Acknowledgments}
The author thanks Benny Sudakov for clarifying the statement of
Conjecture~10.2 in~\cite{MPS}.

\section*{Declaration on the Use of Generative AI}

The mathematical ideas and proof strategies in this paper were developed
by the author. During the preparation of this manuscript, the author used
ChatGPT (GPT-5.6 Sol and GPT-6 Astra) to improve the language, organization,
and clarity of the mathematical exposition. These models also assisted in
reviewing mathematical arguments and checking calculations. All mathematical
arguments and proofs were independently checked and finalized by the author,
who takes full responsibility for the manuscript.

\bibliographystyle{plain}
\bibliography{rainbow_forest_packing}
\end{document}